\def\ep{\varepsilon}
\def\O{\Omega}
\def\di{\displaystyle}
\def\R{{\mathbb {R}}}
\def\lp{L^{p(\cdot)}(\Omega)}
\def\lq{L^{q(\cdot)}(\Omega)}
\def\lpe{L^{p^*(\cdot)}(\Omega)}
\def\wp{W^{1,p(\cdot)}(\Omega)}
\def\Lip{\mathop{\mbox{\normalfont Lip}}\nolimits(\O)}
\def\Lipc{\mathop{\mbox{\normalfont Lip}}\nolimits(\overline{\O})}
\def\M{\mathcal{M}}
\newtheorem{teo}{Theorem}[section]
\newtheorem{lema}[teo]{Lemma}
\newtheorem{corol}[teo]{Corollary}
\theoremstyle{definition}
\theoremstyle{remark}
\newtheorem{remark}[teo]{Remark}
\renewcommand{\theequation}{\arabic{section}.\arabic{equation}}
\def\div{\mathop{\mbox{\normalfont div}}\nolimits}
\begin{document}

\title[$H^2$ regularity for the $p(x)-$Laplacian in two-dimensional]{$H^2$ regularity for the $p(x)-$Laplacian in two-dimensional convex domains}

\author[L. M. Del Pezzo \& S. Mart\'{i}nez ]
{Leandro M. Del Pezzo and Sandra Mart\'{\i}nez}

\address{Leandro M. Del Pezzo \hfill\break\indent
CONICET and Departamento  de Matem\'atica, FCEyN, UBA,
\hfill\break\indent Pabell\'on I, Ciudad Universitaria (1428),
Buenos Aires, Argentina.}
\email{{\tt ldpezzo@dm.uba.ar}\hfill\break\indent {\it Web page:} {\tt http://cms.dm.uba.ar/Members/ldpezzo}}

\address{Sandra Mart\'{\i}nez \hfill\break\indent
IMAS-CONICET and Departamento  de Matem\'atica, FCEyN, UBA,
\hfill\break\indent Pabell\'on I, Ciudad Universitaria (1428),Buenos Aires, Argentina.}
\email{{\tt smartin@dm.uba.ar}}

\thanks{Supported by UBA X117, UBA 20020090300113, CONICET PIP 2009 845/10 and PIP 11220090100625.}

\keywords{Variable exponent spaces. Elliptic Equations. $H^2$ regularity.
\\
\indent 2010 {\it Mathematics Subject Classification.} 35B65, 35J60, 35J70}

\maketitle
\begin{abstract}
    In this paper we study the $H^2$ global regularity for solutions  of the $p(x)-$Laplacian in two dimensional convex domains with Dirichlet boundary
    conditions. Here $p:\O \to [p_1,\infty)$ with $p\in \Lipc$  and $p_1>1$.
\end{abstract}

\section{Introduction}
Let $\Omega$ be a bounded domain in $\R^2$ and let $p:\Omega\to (1,+\infty)$ be a measurable function. In this work, we study the $H^2$ global
regularity of the weak solution of the following problem
\begin{equation}
    \begin{cases}
       - \Delta_{p(x)} u = f & \mbox{in } \O,\\
         u = g & \mbox{on } \partial \Omega,
    \end{cases}
    \label{problema}
\end{equation}
where $\Delta_{p(x)}u=\div(|\nabla u|^{p(x)-2}\nabla u)$ is the $p(x)-$Laplacian. The hypothesis over
$p$, $f$ and $g$  will be specified later.

\medskip
Note that, the $p(x)-$Laplacian extends the classical Laplacian ($p(x)\equiv2$)
and the $p-$Laplacian ($p(x)\equiv p$ with $1<p<+\infty$). This operator has been
recently used in image processing and in the modeling of electrorheological
fluids, see \cite{BCE,CLR,R}.

\medskip
Motivate by the applications to image processing problem, in
\cite{DLM}, the authors study two numerical methods to approximate
solutions of the type of \eqref{problema}. In Theorem 5.1, the
authors prove the convergence in $\wp$ of  the conformal Galerking
finite element method. It is of our interest to study, in a future
work, the rate of this convergence. In general, all the error bounds
depend on the global regularity of the second derivatives of the
solutions, see for example
 \cite{Ci,LB}. However, there appear to be no existing regularity results in the literature that can be applied here,
since all the results have either a first order or local character.
\medskip

The $H^2$ global regularity for solutions of the $p-$Laplacian
is studied in \cite{LB}. There the authors prove the following: Let $1<p\leq 2,$
$g\in H^2(\Omega),$ $f\in L^q(\O)$ ($q>2$) and $u$ be the unique weak solution of
\eqref{problema}. Then
\begin{itemize}
    \item If $\partial\O\in C^2$ then $u\in H^2(\Omega);$
    \item If $\Omega$ is convex and $g=0$ then $u\in H^2(\O);$
    \item If $\O$ is convex with a polygonal boundary and $g\equiv0$ then
        $u\in C^{1,\alpha}(\overline{\O})$ for some $\alpha\in(0,1).$
\end{itemize}

\medskip

Regarding the regularity of the weak solution of \eqref{problema}
when $f=0$, in \cite{BN, CM}, the authors prove the
$C^{1,\alpha}_{loc}$  regularity (in the scalar case and also in the
vectorial case). Then, in the paper  \cite{ELM} the authors study
the case where the functional has the so called  $(p,q)-$ growth
conditions. Following these ideas, in \cite{FanGlobal}, the author
proves that the solutions of \eqref{problema} are in
$C^{1,\alpha}(\overline{\O})$ for some $\alpha>0$ if $\O$ is a
bounded domain in $\R^N$ ($N\ge2$) with $C^{1,\gamma}$ boundary,
$p(x)$ is a H\"{o}lder function,  $f\in L^{\infty}(\O)$  and $g\in
C^{1,\gamma}(\overline{\O}).$ While in \cite{ChLy}, the authors
prove that the solutions are in $H^2_{loc}(\{x\in\O\colon p(x)\le
2\})$ if $p(x)$ is uniformly Lipschitz ($\Lip$) and $ f\in
W^{1,q(\cdot)}_{loc}(\O)\cap L^{\infty}(\O).$

\bigskip

Our aim, it is to generalized the results of \cite{LB} in the case
where $p(x)$ is a measurable function. To this end, we will need some hypothesis over the regularity of $p(x)$.
Moreover, in all our result we can avoid the restriction $g=0$, assuming some regularity of $g(x)$.

On the other hand, to prove our results, we can assume weaker conditions over the function $f$ than the ones on
\cite{ChLy}. Since, we only assume that $f\in\lq,$ we do not have a priori that the solutions are in $C^{1,\alpha}(\O).$
Then we can not use it to prove the $H^2$ global regularity.
Nevertheless, we can prove that the solutions are in $C^{1,\alpha}(\overline{\O}),$
after proving the $H^2$ global regularity.

%\medskip

%The weak formulation of our problem is the following:
%\begin{equation}\label{ecudebil}\begin{aligned} \mbox{ Find }  & u\in V=\{v\in\wp:v =g\mbox{ on } \partial\Omega\} \mbox{ such that }\\
%& \int_{\Omega} |\nabla u|^{p(x)-2}\nabla u \nabla \varphi\, dx=\int_{\Omega} f \varphi\, dx\quad \forall \varphi\in C^{\infty}_0(\O).
%\end{aligned}
%\end{equation}

\medskip

The main results of this paper are:

\begin{teo}\label{H2global}
Let $\O$ be a bounded domain in $\R^2$ with $C^2$ boundary, $p\in
\Lipc$ with $p(x)\ge p_1>1$, $g\in H^2(\O)$ and $u$ be the weak solution  of \eqref{problema}.
If
\begin{enumerate}
\item[(F1)]
$f\in L^{q(x)}(\O)$ with $q(x)\ge q_1> 2$ in the set $\{x\in\O\colon p(x)\le2\};$
\item[(F2)] $f\equiv0$ in the set $\{x\in\O\colon p(x)>2\}$.
\end{enumerate}
then $u\in H^2(\O).$
\end{teo}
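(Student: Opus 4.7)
The proof will follow the regularization strategy of Liu--Barrett \cite{LB}, adapted to the variable-exponent setting. The plan is to replace \eqref{problema} by a non-degenerate smoothed problem depending on a parameter $\varepsilon>0$, derive a uniform $H^2$ bound, and pass to the limit; throughout, the fact that $\Omega\subset\R^2$ will be essential in order to control the full Hessian by the Laplacian.

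\emph{Step 1.} I would mollify $p$, $f$, $g$ to smooth $p_n$, $f_n$, $g_n$, and for each $\varepsilon>0$ consider
\[
-\div\bigl((\varepsilon+|\nabla u_{\varepsilon,n}|^2)^{(p_n(x)-2)/2}\nabla u_{\varepsilon,n}\bigr)=f_n\ \text{in }\Omega,\qquad u_{\varepsilon,n}=g_n\ \text{on }\partial\Omega.
\]
The resulting operator is uniformly elliptic with smooth coefficients and $\partial\Omega\in C^2$, so classical Schauder/Agmon--Douglis--Nirenberg theory yields a solution $u_{\varepsilon,n}\in C^{2,\alpha}(\overline{\Omega})$.

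\emph{Step 2.} Set $w=u_{\varepsilon,n}-g_n$. I would differentiate the equation with respect to $x_k$, multiply by $\partial_k w$, sum in $k$ and integrate by parts in $\Omega$. The principal part is bounded below by $c\int_\Omega (\varepsilon+|\nabla u|^2)^{(p(x)-2)/2}|D^2u|^2\,dx$ with $c=c(p_1)>0$, while the boundary contributions are controlled by $\|g\|_{H^2(\Omega)}$ and the curvature of $\partial\Omega$ (finite since $\partial\Omega\in C^2$). The right-hand side splits along $\Omega=\{p\le 2\}\cup\{p>2\}$: on $\{p>2\}$ hypothesis (F2) gives $f\equiv 0$ so no term appears; on $\{p\le 2\}$ the weight $(\varepsilon+|\nabla u|^2)^{(p(x)-2)/2}$ admits a positive lower bound, and (F1) combined with H\"older's inequality in $\lq$ (here $q_1>2$ is what makes the embedding work in dimension two) and Young's inequality produces a uniform estimate $\|u_{\varepsilon,n}\|_{H^2(\Omega)}\le C$. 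Extracting a weakly convergent subsequence and invoking the monotonicity and uniqueness of weak solutions of \eqref{problema} then identifies the limit with $u$, giving $u\in H^2(\Omega)$.

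\emph{Main obstacle.} The delicate point is the term produced when the variable exponent is differentiated: after differentiating $(\varepsilon+|\nabla u|^2)^{(p_n(x)-2)/2}$ in $x_k$ one picks up a factor
\[
\tfrac12\,\partial_k p_n(x)\,\ln(\varepsilon+|\nabla u|^2)\,(\varepsilon+|\nabla u|^2)^{(p_n(x)-2)/2}\,\partial_k u,
\]
whose $L^2$ control requires more than the a priori $W^{1,p(\cdot)}$ bound. Since we do not yet have $C^{1,\alpha}(\overline{\Omega})$ regularity at this stage, the bulk of the work will lie in absorbing this logarithmic term, using the Lipschitz estimate on $p$, the elementary inequality $|\ln t|\le C_\delta(t^\delta+t^{-\delta})$, and the strict dichotomy (F1)/(F2), which keeps the terms with the worst growth in the region where $f\equiv 0$ and the equation is homogeneous.
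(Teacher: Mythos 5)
Your overall regularization-and-limit architecture matches the paper's, but Step 2 --- the heart of the matter --- rests on an estimate that does not deliver the conclusion. Differentiating the equation and testing with $\partial_k w$ yields a \emph{weighted} Hessian bound of the form $\int_\O (\ep+|\nabla u_\ep|^2)^{(p(x)-2)/2}|D^2u_\ep|^2\,dx\le C$. This does not imply a uniform $H^2(\O)$ bound: where $p(x)>2$ the weight degenerates as $\nabla u_\ep\to0$, and where $p(x)<2$ your claim that the weight ``admits a positive lower bound'' requires a global bound on $|\nabla u_\ep|$ uniform in $\ep$ --- which is not available here, since $f$ is only in $L^{q(\cdot)}(\O)$ with $q_1>2$ (not $L^\infty$) and $g$ is only in $H^2(\O)$, so the global $C^{1,\alpha}$ theory of \cite{FanGlobal} cannot be applied to the original data. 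The paper avoids this entirely: it writes the regularized equation in non-divergence form $a^\ep_{ij}u_{x_ix_j}=a_\ep$ with uniformly elliptic, merely bounded coefficients, and invokes the two-dimensional $H^2$ estimate for such operators (Lemma \ref{cotap11}, after Miranda), so that everything reduces to bounding $\|a_\ep\|_{L^2(\O)}$ with $a_\ep=\ln(v_\ep)\langle\nabla u_\ep,\nabla p\rangle+f_\ep v_\ep^{2-p}$.

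The second gap is that you assert the key uniform estimate rather than produce it. In the paper $\|a_\ep\|_{L^2(\O)}$ is \emph{not} bounded by a constant a priori; it is bounded by sublinear powers of $\|u_\ep\|_{H^2(\O)}$ itself, namely $\|u_\ep\|_{H^2(\O)}^{(1+s)/2}$ for the logarithmic term (via $\ln v_\ep\le Cv_\ep^{s/2}$ on $\{|\nabla u_\ep|>1\}$ and the embedding $H^2(\O)\hookrightarrow W^{1,r}(\O)$ for every $r<\infty$, which is where $N=2$ enters) and $\|u_\ep\|_{H^2(\O)}^{2-p_1}$ for $\|f_\ep v_\ep^{2-p}\|_{L^2(\{p<2\})}$, obtained through a variable-exponent H\"older inequality with a carefully constructed exponent $\widetilde q(x)\le q(x)$ --- this is precisely where (F1) with $q_1>2$ is used. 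Both exponents are strictly less than one, so the resulting inequality self-improves to a uniform bound. Your sketch names the logarithmic term as the obstacle but supplies no absorption mechanism, and the term $f v_\ep^{2-p}$ on $\{p<2\}$, which forces the whole $\widetilde q$ construction, is not addressed at all. Finally, the return from smooth exponents to $p\in\Lipc$ needs more care than mollification: the paper uses a Lusin-type $C^1$ approximation (Lemmas \ref{ext} and \ref{lipyc1}) with $p_\ep=p$ and $\nabla p_\ep=\nabla p$ off a set of measure at most $\ep$, precisely so as to control the error term $\int_{\O\setminus\O_0}(|\nabla u_\ep|^{p_\ep-2}-|\nabla u_\ep|^{p-2})\nabla u_\ep\nabla\varphi\,dx$ in the limit passage.
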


\begin{teo}\label{H2convexa}
Let $\O$ be a bounded domain in $\R^2$ with convex boundary, $p\in\Lipc$ with $p(x)\ge p_1>1$, $g\in H^2(\O)$ and $u$ be the weak solution  of \eqref{problema}.
If $f$ satisfies $(F1)$ and $(F2)$ then $u\in H^2(\O).$
\end{teo}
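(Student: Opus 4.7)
The plan is to reduce Theorem~\ref{H2convexa} to Theorem~\ref{H2global} via an interior approximation of $\O$ by smooth convex subdomains. Choose an increasing sequence $\{\O_n\}$ with $\overline{\O_n}\subset\O_{n+1}\subset\O$, each $\O_n$ bounded, convex and of class $C^2$, and $\bigcup_n\O_n=\O$; such a sequence can be constructed, for instance, by erosion and subsequent mollification of the signed distance to $\partial\O$, and convexity of $\O$ is preserved throughout. For each $n$, let $u_n$ be the weak solution of $-\Delta_{p(x)}u_n=f$ in $\O_n$ with $u_n=g$ on $\partial\O_n$. Since $g\in H^2(\O)$ restricts to an admissible Dirichlet datum on each $\O_n$ and since $f$ satisfies (F1), (F2) on $\O_n\subset\O$, Theorem~\ref{H2global} yields $u_n\in H^2(\O_n)$.

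The decisive step is the uniform estimate $\|u_n\|_{H^2(\O_n)}\le C$ with $C$ independent of $n$. For this one revisits the proof of Theorem~\ref{H2global}: after a regularization of $p$ and of the nonlinearity, the $H^2$ bound is obtained by differentiating the equation, testing against second derivatives of $u_n$ and integrating by parts. The resulting boundary contributions split into a piece controlled by $\|g\|_{H^2(\O)}$ together with suitable norms of $f$ and $|\nabla u_n|$, a piece that can be absorbed on the left-hand side, and a curvature term involving the second fundamental form of $\partial\O_n$. On convex $\O_n$ this last term has the favorable sign and can be discarded, so the final constant depends only on $p_1$, $q_1$, the Lipschitz norm of $p$ on $\overline{\O}$, $\|f\|_{\lq}$, $\|g\|_{H^2(\O)}$ and $\diam(\O)$. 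This is the convexity-based shortcut used in~\cite{LB} for the constant $p$ case, transferred without structural changes to the $p(x)$-setting.

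With the uniform bound in hand, we extend each $u_n$ to $\O$ by setting $u_n=g$ on $\O\setminus\O_n$; these extensions lie in $H^1(\O)$ because $u_n-g\in W^{1,p(\cdot)}_0(\O_n)$. Along a subsequence, the extensions converge weakly in $H^1(\O)$ and almost everywhere; the limit is identified with the weak solution $u$ of~\eqref{problema} by uniqueness. Lower semicontinuity of the $H^2$ seminorm on any relatively compact $\O'\Subset\O$, combined with the uniform bound, gives $\|u\|_{H^2(\O')}\le C$ for every such $\O'$. Exhausting $\O$ by such subdomains and using monotone convergence upgrades this to $u\in H^2(\O)$.

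The principal technical obstacle is the uniformity of the estimate in the second paragraph: it requires the proof of Theorem~\ref{H2global} to be arranged so that the dependence on the $C^2$ character of $\partial\O$ appears only through one boundary integral of definite sign on convex domains. In the $p(x)$-setting, the Lipschitz dependence of the nonlinearity on $x$ produces additional lower-order terms in the integration by parts that must be absorbed without reintroducing curvature-dependent constants; verifying that this can be done cleanly is where the bulk of the work lies.
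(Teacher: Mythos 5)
Your proposal follows essentially the same route as the paper: exhaustion of $\O$ by convex $C^2$ subdomains, application of Theorem~\ref{H2global} on each, a uniform $H^2$ bound whose convexity input is exactly the favorable sign of the curvature term in the two-dimensional Miranda-type identity (in the paper this enters through Lemma~\ref{cotap11} applied to the non-divergence form of the equation for $v_n=u_n-g$), and passage to the limit. The only points you gloss over that the paper treats explicitly are the uniformity in $n$ of the Poincar\'e, Sobolev-embedding and extension-operator constants over the exhausting domains (Remark~\ref{domain}), and the identification of the limit: weak $H^1(\O)$ convergence plus ``uniqueness'' does not by itself show the limit solves \eqref{problema} --- one must pass to the limit in the nonlinear term, which the paper does via strong $H^1_{loc}$ convergence and the monotonicity inequality \eqref{desig}.
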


Using the above theorem we can prove the following,

\begin{corol}\label{c1alpha}Let $\O$ be a bounded convex domain in $\R^2$  with polygonal boundary, $p$ and $f$ as in the previous theorem,
 $g\in W^{2,q(x)}(\O)$  and $u$ be the weak solution  of \eqref{problema}
then $u\in C^{1,\alpha}(\overline{\O})$ for some $0<\alpha<1$.
\end{corol}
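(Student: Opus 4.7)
The plan is to bootstrap from the $H^2$ regularity provided by Theorem \ref{H2convexa}. Since $\Omega$ is convex (so its boundary is convex) and $g\in W^{2,q(\cdot)}(\Omega)\subset H^2(\Omega)$, the hypotheses of Theorem \ref{H2convexa} are satisfied and $u\in H^2(\Omega)$. The key gain in two dimensions is the Sobolev embedding $H^2(\Omega)\hookrightarrow W^{1,r}(\Omega)$ for every $r<\infty$, so that $\nabla u\in L^r(\Omega)$ for every finite $r$. Interior $C^{1,\alpha}_{loc}(\Omega)$ regularity then follows at once from the local results of \cite{BN, CM, FanGlobal}.

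For boundary regularity at a point $x_0\in\partial\Omega$ lying in the relative interior of some edge of the polygon, I would choose a neighborhood $U$ of $x_0$ disjoint from every vertex. On $U\cap\partial\Omega$ the boundary is a $C^\infty$ line segment, and since $g\in W^{2,q(\cdot)}\hookrightarrow C^{1,\beta}(\overline\Omega)$ (by the Morrey-type embedding in 2D, using $q(x)>2$), Fan's global $C^{1,\alpha}$ theorem \cite{FanGlobal} applied to the smooth domain $U\cap\Omega$ yields $u\in C^{1,\alpha}(\overline{U\cap\Omega})$.

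The main obstacle is $C^{1,\alpha}$ regularity at each vertex $v_k$ of $\Omega$, where $\partial\Omega$ is merely Lipschitz and Fan's theorem does not directly apply. I would address this by approximation from inside: pick $C^2$ convex subdomains $\Omega_n\nearrow\Omega$ coinciding with $\Omega$ outside a shrinking neighborhood of the vertices, let $u_n$ solve \eqref{problema} on $\Omega_n$ with boundary datum $g|_{\partial\Omega_n}$, and observe that Theorem \ref{H2convexa} provides a uniform $H^2$ bound on $u_n$, with constants depending only on $p_1$, $\|p\|_{\Lip}$, $\diam(\Omega)$, $\|g\|_{H^2(\Omega)}$ and the constants in (F1)--(F2). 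Uniform $L^r(\Omega_n)$ bounds on $\nabla u_n$ combined with Fan's theorem then give uniform $C^{1,\alpha}(\overline{\Omega_n})$ estimates away from the vertices, while near each vertex the convexity (interior angle strictly less than $\pi$) together with the uniform $H^2$ bound permits a reflection or Campanato-type argument on the equation written in nondivergence form, producing the required Hölder continuity of $\nabla u_n$ up to the corner. Passing to the limit and invoking uniqueness of the weak solution then yields $u\in C^{1,\alpha}(\overline\Omega)$. The delicate point is precisely this uniform Hölder estimate at the corner, which is where convexity (rather than mere Lipschitz regularity) of $\partial\Omega$ is essential.
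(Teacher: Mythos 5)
Your first steps match the paper: invoke Theorem \ref{H2convexa} to get $u\in H^2(\O)$, use the two--dimensional embedding $H^2(\O)\hookrightarrow W^{1,r}(\O)$ for every $r<\infty$, and view $u$ as a solution of a nondivergence--form linear equation $a_{ij}(x)u_{x_ix_j}=a(x)$ with uniformly elliptic bounded measurable coefficients and right--hand side $a(x)\in L^{s}(\O)$ for some $s>2$. But your treatment of the vertices is where the argument breaks down, and it is precisely the point the corollary is about. A uniform $H^2(\O_n)$ bound does not control the $C^{1,\alpha}$ norm of $u_n$ near a corner: in two dimensions $H^2$ embeds into $C^{0,\alpha}$ and into $W^{1,r}$ for all finite $r$, but not into $C^1$, so ``uniform $H^2$ bound plus convexity permits a reflection or Campanato-type argument'' is an assertion, not a proof. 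Reflection is not available for a nondivergence operator whose coefficients $a_{ij}=\delta_{ij}+(p-2)u_{x_i}u_{x_j}/|\nabla u|^2$ are merely bounded and measurable, and a Campanato iteration at a corner with such coefficients is exactly the hard analytic content you would need to supply; your sketch defers it rather than carrying it out. Moreover, the uniform $C^{1,\alpha}(\overline{\O_n})$ estimates you want from \cite{FanGlobal} on the smoothed domains degenerate as the curvature of $\partial\O_n$ blows up near the vertices, so the limiting argument does not rescue you either.

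The paper closes this gap by quoting a linear result of Grisvard (Corollary 8.1.6 in \cite{Gv}, recorded as the theorem preceding Remark \ref{c1alphalineal}): if $\O$ is a convex polygon, $\M$ is uniformly elliptic with $L^\infty$ coefficients, $v\in H^2(\O)\cap H^1_0(\O)$ and $\M v\in L^{s}(\O)$ with $s>2$, then $\nabla v\in C^{\mu}(\overline{\O})$; the inhomogeneous boundary datum is absorbed by setting $v=u-g$ and using $W^{2,q(\cdot)}(\O)\hookrightarrow C^{1,1-2/q_1}(\overline{\O})$. Convexity of the polygon enters through the location of the corner singular exponents in Grisvard's theory, not through any reflection. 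So the correct repair of your proposal is not the approximation scheme (which is unnecessary once $u\in H^2(\O)$ is known on the polygon itself) but the verification that $a(x)\in L^{s}(\O)$ for some $s>2$ --- which follows from $\nabla u\in L^{r}(\O)$ for all $r<\infty$, the Lipschitz bound on $\nabla p$, hypothesis (F2) on the set $\{p>2\}$, and the H\"older-in-$L^{q(\cdot)}$ estimate of Theorem \ref{H2globalcbeta} on the set $\{p<2\}$ --- followed by an appeal to that linear corner theorem.
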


Observe that this result extends the one in \cite{FanGlobal} in the case where $\O$ is a polygonal domain in $\R^2$.

\bigskip

\noindent{\bf Organization of the paper}. The rest of the paper is organized as follows. After a short Section \ref{A} where we collect some preliminaries results,
  in Section \ref{nodeg}, we
study the $H^2-$ regularity for the non-degenerated problem. In Section \ref{pt11} we prove Theorem
\ref{H2global}. Then, in Section \ref{convex}, we study the regularity of the solution $u$ of \eqref{problema} if $\O$
 is convex. In Section \ref{coments}, we make some comments on the dependence  of the $H^2-$norm of $u$ on
$p_1.$  Lastly, in Appendices \ref{B} and \ref{C} we give some results related to elliptic linear equation with bounded coefficients and Lipschitz functions, respectively.

\section{Preliminaries}\label{A}

We now introduce the space  $L^{p(\cdot)}(\Omega)$ and $W^{1,p(\cdot)}(\Omega)$
and state some of their properties.

\medskip

Let $\O$ be a bounded open set of $\R^n$ and $p \colon\Omega \to  [1,+\infty)$ be a measurable bounded function,
called a variable exponent on $\Omega$ and denote $p_{1}:= ess inf \,p(x)$ and $p_{2} := ess sup \,p(x).$

We define the variable exponent Lebesgue space $\lp$ to consist of all measurable functions
$u \colon\Omega \to \R$ for which the modular
$$
\varrho_{p(\cdot)}(u) := \int_{\Omega} |u(x)|^{p(x)}\, dx
$$
is finite. We define the Luxemburg norm on this space by
$$
\|u\|_{L^{p(\cdot)}(\Omega)}:= \inf\{k > 0\colon \varrho_{p(\cdot)}(u/k)\leq 1 \}.
$$
This norm makes $L^{p(\cdot)}(\Omega)$ a Banach space.

\medskip

For the proofs of the following theorems, we refer the reader to
\cite{DHHR}.

\begin{teo}[H\"older's inequality]
    Let $p,q, s:\O\to[1,+\infty]$ be a measurable functions such that
    \[
    \frac1{p(x)}+\frac1{q(x)} = \frac1{s(x)} \quad \mbox{in } \O.
    \]
     Then the inequality
    \[
    \|f g\|_{L^{s(\cdot)}(\O)} \le 2 \|f\|_{\lp}\|g\|_{\lq}
    \]
    for all $f\in \lp$ and $g\in\lq $
\end{teo}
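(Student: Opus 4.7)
The plan is to reduce to the case of unit norms by homogeneity and then apply a pointwise Young inequality with variable exponents. More precisely, assuming without loss of generality that $f$ and $g$ are not trivial, I would first replace $f$ by $F:=f/\|f\|_{\lp}$ and $g$ by $G:=g/\|g\|_{\lq}$. By the definition of the Luxemburg norm, this gives the modular bounds
\[
\varrho_{p(\cdot)}(F) = \int_\Omega |F(x)|^{p(x)}\,dx \le 1,
\qquad
\varrho_{q(\cdot)}(G) = \int_\Omega |G(x)|^{q(x)}\,dx \le 1,
\]
and the stated inequality reduces to showing $\|FG\|_{L^{s(\cdot)}(\Omega)} \le 2$.

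The heart of the argument is the observation that $P(x):=p(x)/s(x)$ and $Q(x):=q(x)/s(x)$ are pointwise conjugate exponents, since $\tfrac{1}{P(x)}+\tfrac{1}{Q(x)} = \tfrac{s(x)}{p(x)}+\tfrac{s(x)}{q(x)} = 1$. Applying the classical (scalar) Young inequality at each $x$ to $a=|F(x)|^{s(x)}$ and $b=|G(x)|^{s(x)}$ with exponents $P(x),Q(x)$ gives
\[
|F(x)G(x)|^{s(x)}
\le \frac{|F(x)|^{s(x)P(x)}}{P(x)} + \frac{|G(x)|^{s(x)Q(x)}}{Q(x)}
= \frac{s(x)}{p(x)}|F(x)|^{p(x)} + \frac{s(x)}{q(x)}|G(x)|^{q(x)}.
\]
Since $s(x)\le p(x)$ and $s(x)\le q(x)$, integrating over $\Omega$ and using the two modular bounds yields $\varrho_{s(\cdot)}(FG) \le 2$.

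To finish, I would use that $s(x)\ge 1$ almost everywhere, which implies $2^{s(x)}\ge 2$, and therefore
\[
\varrho_{s(\cdot)}(FG/2) = \int_\Omega \frac{|F(x)G(x)|^{s(x)}}{2^{s(x)}}\,dx
\le \frac{1}{2}\varrho_{s(\cdot)}(FG) \le 1.
\]
By the definition of the Luxemburg norm, this gives $\|FG\|_{L^{s(\cdot)}(\Omega)} \le 2$, and undoing the normalization yields the desired inequality.

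The only real obstacle is the bookkeeping when $p(x)$ or $q(x)$ equals $+\infty$ on a set of positive measure, since there the modular definition has to be interpreted as an essential supremum. I would handle this by splitting $\Omega$ into the subsets $\{p=\infty\}$, $\{q=\infty\}$ and their complements, treating the infinite parts by the standard essential-supremum argument ($\|fg\|_{L^{s(\cdot)}}\le \|f\|_{L^\infty}\|g\|_{L^{s(\cdot)}}$ when $p=\infty$, so $s=q$), and carrying out the computation above on the region where both exponents are finite.
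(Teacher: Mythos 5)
The paper does not actually prove this theorem---it defers to \cite{DHHR}---and your argument is precisely the standard proof given there: normalize to unit Luxemburg norm, apply the pointwise Young inequality with the conjugate exponents $p(x)/s(x)$ and $q(x)/s(x)$, use $s(x)\le p(x)$ and $s(x)\le q(x)$ to bound the modular of $FG$ by $2$, and convert back to a norm bound via $2^{s(x)}\ge 2$; all of these steps are correct. The only loose end is your treatment of infinite exponents: splitting $\Omega$ into $\{p=\infty\}$, $\{q=\infty\}$ and their complement and then adding the resulting Luxemburg norms over the pieces would inflate the constant beyond $2$, so to preserve the stated constant one should instead run the same modular computation globally with the usual convention $t^{\infty}:=\infty\cdot\chi_{(1,\infty]}(t)$ for the modular (as in \cite{DHHR}); under the paper's standing assumption that the exponents are finite and bounded this issue does not arise at all.
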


Let $W^{1,p(\cdot)}(\Omega)$ denote the space of measurable functions $u$ such that, $u
$ and the distributional derivative $\nabla u$ are in $L^{p(\cdot)}(\Omega)$. The norm
$$
\|u\|_{W^{1,p(\cdot)}(\O)}:= \|u\|_{p(\cdot)} + \| |\nabla u| \|_{p(\cdot)}
$$
makes $W^{1,p(\cdot)}(\Omega)$ a Banach space.

\begin{teo}\label{ref}
Let $\di p'(x)$ such that, ${1}/{p(x)}+{1}/{p'(x)}=1.$ Then
$L^{p'(\cdot)}(\Omega)$ is the dual of $L^{p(\cdot)}(\Omega)$.
Moreover, if $p_1>1$, $L^{p(\cdot)}(\Omega)$ and $W^{1,p(\cdot)}(\Omega)$ are
reflexive.
\end{teo}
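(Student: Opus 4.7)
The plan is to prove the three assertions in turn: duality of $\lp$ with $\lpp$, reflexivity of $\lp$, and reflexivity of $\wp$.

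For the duality, I will first observe that for every $g\in\lpp$ the formula $T_g(f):=\int_\O fg\,dx$ defines an element of the dual of $\lp$, with $\|T_g\|\le 2\|g\|_{p'(\cdot)}$ by the H\"older inequality already stated. The substantive half is surjectivity. Given a continuous linear functional $\Phi$ on $\lp$, since $p$ is bounded, the characteristic function $\chi_E$ of every measurable $E\subset\O$ lies in $\lp$, so $\nu(E):=\Phi(\chi_E)$ is a signed measure absolutely continuous with respect to Lebesgue measure; Radon--Nikodym supplies $g\in L^1(\O)$ with $\nu(E)=\int_E g\,dx$, and by linearity and simple-function density $\Phi(f)=\int_\O fg\,dx$ for all simple $f$. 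To extend this identity to all of $\lp$ and to conclude $g\in\lpp$ with the correct norm bound, I would test $\Phi$ against a truncation of $|g|^{p'(\cdot)-2}g$, suitably rescaled, and exploit the definition of the Luxemburg norm to read off $\|g\|_{p'(\cdot)}\le\|\Phi\|$.

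For reflexivity of $\lp$ when $p_1>1$, the strategy is to verify uniform convexity and invoke the Milman--Pettis theorem. Since $p$ is bounded we have $p_2<\infty$, so the exponent lies in a compact subinterval of $(1,\infty)$. I would establish pointwise Clarkson-type inequalities for $t\mapsto t^{p(x)}$ in the two regimes $p(x)\ge 2$ and $1<p(x)\le 2$ (the latter handled by duality), integrate to obtain uniform convexity of the modular $\varrho_{p(\cdot)}$, and then transfer this to the Luxemburg norm using the standard sandwich $\min\{\|u\|^{p_1},\|u\|^{p_2}\}\le\varrho_{p(\cdot)}(u)\le\max\{\|u\|^{p_1},\|u\|^{p_2}\}$. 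Reflexivity of $\wp$ then follows softly: the map $J(u):=(u,\nabla u)$ isometrically embeds $\wp$ into the product Banach space $\lp\times(\lp)^n$, which is reflexive as a finite product of reflexive spaces. Closedness of $J(\wp)$ comes from continuity of distributional differentiation on $L^1_{loc}(\O)$ together with $\lp\hookrightarrow L^{p_1}(\O)\hookrightarrow L^1_{loc}(\O)$; and a closed subspace of a reflexive space is reflexive.

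The main obstacle is the duality step, specifically the extraction of the sharp Luxemburg-norm bound on the Radon--Nikodym derivative $g$: unlike the constant-exponent case, one cannot simply plug in $f=|g|^{p'-2}g/\|g\|_{p'}^{p'-1}$ and read off a power of the norm, because $\varrho_{p(\cdot)}$ is not positively homogeneous. A careful truncation argument and direct manipulation of the Luxemburg infimum are required. By contrast, the pointwise Clarkson inequalities and the $\wp$ step are essentially routine, the latter reducing literally to the fact that closed subspaces of finite products of reflexive spaces are reflexive.
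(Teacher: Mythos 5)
The paper itself gives no proof of this theorem: it is quoted from the literature, with the reader referred to \cite{DHHR} (see also \cite{KR}), so the only meaningful comparison is with the standard proofs there, which your outline follows for the duality and for the $W^{1,p(\cdot)}$ step. Your duality argument (Radon--Nikodym plus a truncation of $|g|^{p'(\cdot)-2}g$ to extract the Luxemburg bound) is the classical one and works precisely because $\O$ is bounded and $p_2<\infty$, which is what makes simple functions dense in $\lp$ and $\nu$ countably additive. The only omission there is the set $\{x\colon p(x)=1\}$, which the statement allows: on it $p'(x)=\infty$ and the test function $|g|^{p'(\cdot)-2}g$ is meaningless, so one must bound $|g|$ by $\|\Phi\|$ a.e.\ directly, as in the scalar proof of $(L^1)^*=L^\infty$ (in this paper the point is moot, since $p_1>1$ wherever the theorem is used).

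The one step that would fail as written is ``the latter handled by duality'' for the regime $1<p(x)\le 2$. Clarkson's second inequality is an \emph{integral} inequality relating the $L^p$ and $L^{p'}$ norms; its proof by duality (or by the reverse Minkowski inequality) does not yield a pointwise inequality for $t\mapsto t^{p}$ that you can integrate against a variable exponent, and the inequality itself cannot even be formulated when $p(x)$ varies, because the conjugate exponent changes from point to point. The standard repair --- and what \cite{DHHR} actually do --- is to use the genuinely pointwise fact that $t\mapsto t^{p}$ is a uniformly convex function for each $p>1$, with modulus uniform for $p$ in compact subsets of $(1,\infty)$; integrating that gives uniform convexity of the modular on all of $\O$ at once, and then your sandwich between modular and norm finishes the argument. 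Alternatively, and more cheaply, reflexivity does not require uniform convexity at all: since $1<p_1\le p_2<\infty$ forces $1<\inf p'\le\sup p'<\infty$, your own duality theorem applies to $p'$ as well, giving $(L^{p(\cdot)}(\O))^{**}\cong (L^{p'(\cdot)}(\O))^{*}\cong L^{p(\cdot)}(\O)$ with the composite equal to the canonical embedding; this is the route in \cite{KR}. Your reduction of the $W^{1,p(\cdot)}$ case to a closed subspace of $L^{p(\cdot)}(\O)\times (L^{p(\cdot)}(\O))^{n}$ is correct and standard.
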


\medskip

We define the space $W_0^{1,p(\cdot)}(\Omega)$ as the closure of the
$C_0^{\infty}(\Omega)$ in $W^{1,p(\cdot)}(\Omega)$.
Then we have the following version of Poincare's inequity
(see Theorem 3.10 in \cite{KR}).

\begin{lema}[Poincare's inequity]
\label{poinc} If $p:\O\to[1,+\infty)$ is continuous in $\overline{\Omega}$,
    there exists a constant $C$ such that for every $u\in W_0^{1,p(\cdot)}(\Omega)$,
$$
\|u\|_{L^{p(\cdot)}(\Omega)}\leq C\|\nabla
u\|_{L^{p(\cdot)}(\Omega)}.
$$
\end{lema}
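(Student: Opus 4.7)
Since $\wpc$ is by definition the closure of $C_0^\infty(\O)$ in the $\wp$ norm, it is enough to establish the inequality for $u\in C_0^\infty(\O)$ and then pass to the limit using the fact that both sides depend continuously on $u$ in that norm.

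For such a $u$, the natural strategy is the classical one. I would extend $u$ by zero to all of $\R^n$, place $\O$ inside a slab $\{a\le x_1 \le b\}$ of width $d=b-a\le\diam(\O)$, and invoke the fundamental theorem of calculus in the $x_1$-direction. Since $u$ vanishes at $x_1=a$, this gives the pointwise estimate
$$|u(x)|\le \int_a^b |\partial_1 u(t,x')|\,\chi_\O(t,x')\,dt.$$
Thus the Poincaré inequality reduces to showing that the one-dimensional integration operator $Tg(x):=\int_a^b g(t,x')\,\chi_\O(t,x')\,dt$ sends $\lp$ into itself with operator norm at most $Cd$.

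To establish that operator bound, I would apply the variable exponent Hölder inequality stated above to pair $g(\cdot,x')\chi_\O(\cdot,x')$ against $\chi_{[a,b]}$ on each one-dimensional slice. The continuity of $p$ on the compact set $\overline{\O}$ ensures uniform two-sided bounds $p_1\le p(\cdot)\le p_2<\infty$; combined with the standard relations between norm and modular in $\lp$, this controls the $L^{p'(\cdot)}$-norm of $\chi_{[a,b]}$ along each fiber by a constant depending only on $d$, $p_1$, and $p_2$.

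The main obstacle is that variable exponent norms do not interact with Fubini as cleanly as their constant exponent counterparts: after Hölder on each slice, one is left with expressions of the form $|g(t,x')|^{p(x_1,x')}$ in which the exponent and the base are evaluated at different points. To handle this I would work at the level of the modular $\varrho_{p(\cdot)}$ rather than the norm itself, splitting $\{|g|\le 1\}$ from $\{|g|>1\}$ and using the uniform bounds $p_1\le p\le p_2$ to compare $|g(t,x')|^{p(x_1,x')}$ with $|g(t,x')|^{p(t,x')}$ up to a multiplicative constant plus a term absorbed by $|\O|$. Converting back to the norm via the norm/modular equivalence then yields the Poincaré inequality with $C=C(d,p_1,p_2)$.
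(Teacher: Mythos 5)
The paper offers no proof of this lemma --- it simply cites Theorem 3.10 of Kov\'a\v{c}ik--R\'akosn\'{\i}k --- so your argument has to be judged on its own terms. The density reduction and the pointwise bound $|u(x)|\le\int_a^b|\partial_1 u(t,x')|\chi_\O(t,x')\,dt$ are fine, but the statement you reduce everything to, namely that $Tg(x):=\int_a^b g(t,x')\chi_\O(t,x')\,dt$ is bounded on $L^{p(\cdot)}(\O)$, is false for general continuous exponents, so the strategy cannot be completed. Take $\O=(0,1)^2$ and $p=p(x_1)$ continuous with $p\equiv p_2$ on $[0,\nicefrac12]$ and $p\equiv p_1<p_2$ on $[\nicefrac34,1]$, and set $g_\delta=\chi_{[\nicefrac34,1]\times[0,\delta]}$. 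A direct computation with the modular gives $\|g_\delta\|_{L^{p(\cdot)}(\O)}=(\delta/4)^{1/p_1}$, while $Tg_\delta=\tfrac14\chi_{[0,\delta]}(x_2)$ is constant in $x_1$ and therefore sees the region where $p=p_2$, yielding $\|Tg_\delta\|_{L^{p(\cdot)}(\O)}\ge\tfrac14(\delta/2)^{1/p_2}$; the ratio behaves like $\delta^{1/p_2-1/p_1}$ and blows up as $\delta\to0$.

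The failure sits exactly at the step you yourself flag as the main obstacle. On $\{|g|>1\}$ the comparison $|g(t,x')|^{p(x_1,x')}\le C\bigl(|g(t,x')|^{p(t,x')}+1\bigr)$ requires $|g(t,x')|^{p(x_1,x')-p(t,x')}$ to be bounded, which fails whenever $p(x_1,x')>p(t,x')$ and $g$ is unbounded; the crude bounds $p_1\le p\le p_2$ only give $|g|^{p_2}$, which is not controlled by $|g|^{p(t,x')}$ for large $|g|$, and the ``$+|\O|$'' term only absorbs the set $\{|g|\le1\}$. Uniform continuity of $p$ does not help here because $|x_1-t|$ ranges up to $\diam(\O)$ rather than being small. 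Some genuine integrability gain is needed to absorb the oscillation of the exponent: the classical routes either cover $\overline{\O}$ by finitely many small sets on which the oscillation of $p$ is small and apply the constant-exponent Sobolev--Poincar\'e inequality between $L^{p_i^-}$ and $L^{p_i^+}$ on each piece (this is where continuity of $p$ on the compact set $\overline{\O}$ actually enters, and is essentially the argument of the cited reference), or pass through the embedding $\wpc\hookrightarrow\lpe$ under a log-H\"older hypothesis. Your FTC-plus-H\"older scheme provides no such gain, so this is a genuine gap rather than a technicality.
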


\medskip

In order to have better properties of these spaces, we need more hypotheses on the regularity of $p(x)$.

We say that $p$ is \emph{$\log$-H\"{o}lder continuous} in $\O$ if there exists a
constant $C_{log}$ such that
$$|p(x) - p(y)| \leq \frac{C_{log}}{\log\,\left(e+\frac{1}{|x - y|}\right)}\quad \forall\, x,y\in\O.$$

It was proved in  \cite{D}, Theorem 3.7, that if one assumes that $p$ is log-H\"{o}lder continuous then  $C^{\infty}(\bar{\Omega})$ is dense in
$W^{1,p(\cdot)}(\Omega)$ (see also  \cite{Di,DHHR, DHN, KR, Sam1}).

\medskip

We now state the Sobolev embedding Theorem  (for the proofs see
\cite{DHHR}). Let,
\[
p^*(x):=
\begin{cases}
\frac{p(x)N}{N-p(x)} & \mbox{if } p(x)<N,\\
+\infty & \mbox{if } p(x)\ge N,
\end{cases}
\]
be the Sobolev critical exponent. Then we have the following,
\begin{teo}\label{embed}
Let $\Omega$ be a Lipschitz domain. Let $p:\Omega\to [1,\infty)$ and $p$ log-H\"{o}lder continuous. Then the imbedding $\wp\hookrightarrow \lpe$ is continuous.
\end{teo}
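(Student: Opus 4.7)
The plan is to reduce the variable-exponent Sobolev embedding to the known boundedness of the Riesz potential on $L^{p(\cdot)}(\R^N)$. Three ingredients do the work: (i) density of smooth functions in $\wp$, which by Diening's theorem (Theorem 3.7 of \cite{D}) is available precisely under the log-H\"older hypothesis; (ii) a bounded extension operator $E: \wp \to W^{1,p(\cdot)}(\R^N)$, which exists because $\Omega$ is Lipschitz and $p$ is log-H\"older (so that $p$ can be extended to a log-H\"older exponent $\tilde p$ on $\R^N$ with the same infimum and supremum, and standard Stein-type reflection/partition-of-unity constructions transfer to the variable exponent setting once the maximal operator is known to be bounded on $L^{\tilde p(\cdot)}(\R^N)$); and (iii) the pointwise representation $|v(x)| \le C\, I_1(|\nabla v|)(x)$ for $v \in C_c^\infty(\R^N)$, where $I_1 f(x) = \int_{\R^N} |x-y|^{1-N} f(y)\,dy$ is the Riesz potential of order one.

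First I would reduce, by density, to $u \in C^\infty(\overline{\Omega})$, then set $v = Eu \in W^{1,\tilde p(\cdot)}(\R^N)$, truncate $v$ by a smooth compactly supported cut-off, and regularise to land in $C_c^\infty(\R^N)$ so that the pointwise Riesz bound applies. Taking modular/norm on both sides yields
\[
\|v\|_{L^{\tilde p^*(\cdot)}(\R^N)} \;\le\; C\,\|I_1(|\nabla v|)\|_{L^{\tilde p^*(\cdot)}(\R^N)}.
\]
Restricting to $\Omega$ and using boundedness of $E$ then gives
\[
\|u\|_{\lpe} \;\le\; \|v\|_{L^{\tilde p^*(\cdot)}(\R^N)} \;\le\; C\,\|\,|\nabla v|\,\|_{L^{\tilde p(\cdot)}(\R^N)} \;\le\; C\,\|u\|_{\wp},
\]
which is the continuous embedding in question.

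The main obstacle is step (iii), namely the bound $\|I_1 f\|_{L^{\tilde p^*(\cdot)}(\R^N)} \le C\|f\|_{L^{\tilde p(\cdot)}(\R^N)}$; this is the deep point where log-H\"older continuity is essential. The standard route is Hedberg's splitting: write $I_1 f(x) = A(x) + B(x)$ where $A$ is the integral over $B(x,r)$ and $B$ is the tail; then $|A(x)| \le C r\, Mf(x)$ with $M$ the Hardy-Littlewood maximal operator, and by H\"older's inequality $|B(x)| \le C r^{1 - N/p(x)} \|f\|_{L^{p(\cdot)}}$, and optimising in $r$ gives the pointwise bound $|I_1 f(x)| \le C (Mf(x))^{p(x)/p^*(x)} \|f\|_{L^{p(\cdot)}}^{1 - p(x)/p^*(x)}$. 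The desired $L^{p^*(\cdot)}$ estimate then follows from Diening's boundedness of $M$ on $L^{p(\cdot)}(\R^N)$ (which is where log-H\"older enters in an indispensable way) together with the modular inequality for variable exponents. A secondary difficulty is the region where $p(x) \ge N$, in which $p^*(x) = \infty$ and the embedding becomes Morrey-type; the same Hedberg splitting handles this regime provided one keeps track of uniform control of the constants as $p(x) \to N$, which is exactly what the log-H\"older modulus provides.
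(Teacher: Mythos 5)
The paper offers no proof of this theorem at all: it is quoted from \cite{DHHR}, and the proof there follows essentially the route you describe --- a pointwise Riesz-potential bound, Hedberg's splitting into a local term controlled by $r\,Mf(x)$ and a tail controlled via H\"older's inequality, and the boundedness of the maximal operator on $L^{p(\cdot)}(\R^N)$ under the log-H\"older hypothesis. For the region where the exponent stays strictly below $N$ your sketch is the standard argument and is sound. One remark on step (ii): \cite{DHHR} largely avoid the extension operator by proving the Sobolev--Poincar\'e inequality directly on John domains (which include bounded Lipschitz domains) through the representation $|u(x)-u_B|\le C\,I_1(|\nabla u|\chi_B)(x)$; a bounded extension $E:\wp\to W^{1,\tilde p(\cdot)}(\R^N)$ is itself a nontrivial theorem in the variable-exponent setting, so it should be cited as such rather than treated as a routine transfer of Stein's construction.

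The genuine gap is your final paragraph, concerning the set $\{x\in\Omega\colon p(x)\ge N\}$ where $p^*(x)=+\infty$. You claim the same Hedberg splitting ``handles this regime provided one keeps track of uniform control of the constants as $p(x)\to N$, which is exactly what the log-H\"older modulus provides.'' This is not correct. The constant obtained by optimizing the two Hedberg terms in $r$ blows up as $p(x)\to N$, and no modulus of continuity of the exponent can repair this: already for the constant exponent $p\equiv N$ (which is trivially log-H\"older) the asserted embedding $W^{1,N}(\Omega)\hookrightarrow L^{\infty}(\Omega)$ is false (in dimension $2$, $\log\log(1/|x|)$ lies in $H^1$ near the origin but is unbounded). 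A correct treatment either assumes $\sup_\Omega p<N$ (as in the main embedding theorem of \cite{DHHR}, Section 8.3), or splits $\Omega$ into $\{p\le N-\delta\}$, where your argument applies, and $\{p\ge N+\delta\}$, where a Morrey-type embedding into a H\"older space holds, with only a weaker conclusion ($L^q$ for every finite $q$) available near the borderline $p=N$; indeed the statement itself requires this caveat at points where $p(x)=N$. In the body of the paper only finite target exponents are ever used (the phrase ``$2^*=\infty$'' is invoked only to obtain $H^1\hookrightarrow L^r$ for finite $r$), so this does not affect the applications, but your proof as written does not establish the statement on $\{p\ge N\}$.
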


\section{$H^2-$Regularity for the non-degenerated problem for any  dimension}\label{nodeg}

In this section we assume that $\O$ is a bounded domain in $\R^N$, with $N\geq 2$.

\medskip

We want to study higher regularity of the weak solution of the regularized equation,
\begin{equation}
    \begin{cases}
        -\div\left((\varepsilon+|\nabla u|^2)^{\frac{p(x)-2}2}\nabla
        u\right) = f
        &\quad \mbox{in } \O,\\
        u=g &\quad \mbox{on } \partial\O,
        \end{cases}
    \label{regularizado}
\end{equation}
where $0<\ep\leq 1$, and $f\in \Lip$ and $g\in \wp$.

\medskip

The existence of a weak solution of \eqref{regularizado} holds by Theorem 13.3.3 in \cite{DHHR}.

\medskip
\begin{remark}\label{regularidadborde} Given $\ep\geq 0$, $p\in C^{\alpha_0}(\overline{\Omega})$ for some $\alpha_0>0$, and $g\in L^{\infty}(\O)$ we have the following results,
\begin{enumerate}

\item
Since $f,g\in L^{\infty}(\O)$,  by Theorem 4.1 in \cite{FZ}, we have that $u\in L^{\infty}(\O).$
\item  By Theorem 1.1 in \cite{FanGlobal},
 $u\in C_{loc}^{1,\alpha}(\O)$ for some $\alpha$ depending on $p_1,p_2,$ $\|u\|_{L^{\infty}(\O)}, \|f\|_{L^{\infty}(\O)}.$ Moreover, given
 $\O_0\subset\subset\O,$ $\|u\|_{C^{1,\alpha}(\O_0)}$ depends on the same constants and $dist(\O_0,\partial\O)$.
\item
 Finally, by Theorem 1.2 in \cite{FanGlobal}, if  $\partial\O\in C^{1,\gamma}$ and $g\in C^{1,\gamma}(\partial\O)$ for some $\gamma>0$ then
$u\in C^{1,\alpha}(\overline{\O})$, where  $\alpha$ and  $\|u\|_{C^{1,\alpha}(\O)}$ depend on $p_1,p_2,N,\|u\|_{L^{\infty}(\O)},$ $ \|p\|_{C^{\alpha_0}(\Omega)},\alpha_0,\gamma$.
\end{enumerate}
\end{remark}

We will first prove the  $H^2$-local regularity assuming only that $p(x)$ is Lipschitz. Then, we will prove
the global regularity under the stronger condition that
$\nabla p(x)$ is H\"{o}lder.

\subsection{$H^2-$Local regularity}

While we where finishing this paper, we found the work \cite{ChLy}, where the authors give a different proof of  the  $H^2$-local regularity  of the solutions of
\eqref{regularizado}. Anyhow, we leave the proof for the completeness of this paper.

\begin{teo}\label{regularidadloc}
Let $p, f\in \Lip$ with $p_1>1$  and $u$ a weak solution of \eqref{regularizado}, then
$u\in H^2_{loc}(\O).$
\end{teo}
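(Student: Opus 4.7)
The plan is to apply Nirenberg's difference-quotient method, adapted to the $p(x)$ setting. Fix concentric balls $B_R\subset\subset B_{2R}\subset\subset\O$ and a cut-off $\eta\in C_c^\infty(B_{2R})$ with $\eta\equiv 1$ on $B_R$. By Remark \ref{regularidadborde}(2), $u\in C^{1,\alpha}_{loc}(\O)$, so $M:=\|\nabla u\|_{L^\infty(B_{2R})}<\infty$; combined with $\ep>0$ this forces the two-sided pointwise bound $0<c_0\le (\ep+|\nabla u|^2)^{(p(x)-2)/2}\le C_0$ on $B_{2R}$. This non-degeneracy, absent in the original $p(x)$-Laplacian, is what renders the linearized operator uniformly elliptic on $B_{2R}$.

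Write $A(x,\xi):=(\ep+|\xi|^2)^{(p(x)-2)/2}\xi$ and $\Delta_h^k v(x):=h^{-1}(v(x+he_k)-v(x))$ for small $h$ and $k\in\{1,\dots,N\}$. The fundamental theorem of calculus along the segment joining $(x,\nabla u(x))$ to $(x+he_k,\nabla u(x+he_k))$ produces the decomposition
\[\Delta_h^k[A(\cdot,\nabla u)](x) = B_h(x)\,\Delta_h^k\nabla u(x) + C_h(x),\]
where $B_h$ and $C_h$ are the averages of $D_\xi A$ and $\partial_{x_k}A$ along that segment. The matrix $B_h$ is uniformly elliptic with constants depending only on $\ep,p_1,p_2,M$. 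Since $\partial_{x_k}A=\tfrac12\,\partial_{x_k}p(x)\log(\ep+|\xi|^2)A(x,\xi)$ and $p\in\Lip$, $|\xi|\le M$, $\ep>0$, the term $C_h$ is bounded in $L^\infty(B_{2R})$ uniformly in $h$.

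Testing the identity $-\div\Delta_h^k[A(\cdot,\nabla u)]=\Delta_h^k f$ against $\phi=\eta^2\Delta_h^k u$ and expanding $\nabla(\eta^2\Delta_h^k u)=2\eta(\Delta_h^k u)\nabla\eta+\eta^2\Delta_h^k\nabla u$ yields
\[\int \eta^2 B_h\Delta_h^k\nabla u\cdot\Delta_h^k\nabla u\,dx = -2\int \eta(\Delta_h^k u)\nabla\eta\cdot B_h\Delta_h^k\nabla u\,dx - \int C_h\cdot\nabla(\eta^2\Delta_h^k u)\,dx + \int\Delta_h^k f\cdot\eta^2\Delta_h^k u\,dx.\]
The left side is bounded below by $c\int\eta^2|\Delta_h^k\nabla u|^2$. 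On the right, $\|\Delta_h^k f\|_\infty\le\|\nabla f\|_\infty$ since $f\in\Lip$, and $\|\Delta_h^k u\|_{L^\infty(\mathrm{supp}\,\eta)}\le M$ since $u\in C^{1,\alpha}_{loc}$. A standard application of Young's inequality absorbs the $\Delta_h^k\nabla u$ factors on the right into the left, producing $\|\Delta_h^k\nabla u\|_{L^2(B_R)}\le C$ uniformly in $h$. The usual characterization of weak derivatives then gives $\partial_k\nabla u\in L^2(B_R)$; summing over $k$ and exhausting $\O$ by such balls delivers $u\in H^2_{loc}(\O)$.

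The main obstacle is the drift term $C_h$, genuinely new compared with the constant-exponent case: its pointwise bound involves a $\log(\ep+|\nabla u|^2)$ factor that would blow up as $\ep\to 0$ or $|\nabla u|\to\infty$. Its control here rests squarely on the non-degeneracy $\ep>0$ together with the local $L^\infty$ bound on $\nabla u$ from $C^{1,\alpha}_{loc}$ regularity, and it is exactly the step in which the Lipschitz assumption on $p$ enters.
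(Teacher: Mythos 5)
Your argument is correct and follows essentially the same route as the paper: Nirenberg difference quotients tested against $\xi^2\Delta^h u$, with the three decisive inputs being the local gradient bound from $C^{1,\alpha}_{loc}$ regularity, the uniform ellipticity supplied by $\ep>0$, and the Lipschitz continuity of $p$ and $f$ to control the exponent-variation (drift) term. The only difference is bookkeeping: you linearize $A(x,\xi)$ along a segment to obtain the elliptic matrix $B_h$ plus the bounded drift $C_h$, whereas the paper splits the corresponding integral into $I_1+I_2$ and lower-bounds $I_1$ by a monotonicity inequality for the frozen-exponent vector field; the two are equivalent here.
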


\begin{proof}
First, let us define for any function $F$ and $h>0$,
$$
\Delta^h F(x)=\frac{F(x+{\bf h})-F(x)}{h},
$$
where ${\bf h}=he_k$ where $e_k$ is a vector of the canonical base of $\R^N.$

Let $\eta(x)=\xi(x)^2 \Delta^h u(x)$ where $\xi$ is a regular
function with compact support. Therefore, if we take
$v_\ep=
(|\nabla u|^2+\ep)^{1/2}$ and $h<\mbox{dist}(\mbox{supp}(\xi),\partial\O)$, we have
\begin{align*}
\int_{\O} \langle v_\ep(x)^{p(x)-2}\nabla u(x), \nabla \eta(x)\rangle\,
dx&=\int_{\O}f(x) \eta(x)\, dx\\
\int_{\O} \langle v_\ep(x+{\bf h})^{p(x+{\bf h})-2}\nabla u(x+{\bf h}), \nabla \eta(x)  \rangle \, dx&=\int_{\O}f(x+{\bf h}) \eta(x)\,
 dx.
\end{align*}
Subtracting, using that $\nabla \eta=2 \xi \nabla \xi \Delta^h
u+\xi^2 \Delta^h(\nabla u)$  and dividing by $h$ we obtain,
\begin{align*}
I=&\int_{\O} \langle \Delta^h(v_\ep(x)^{p(x)-2}  \nabla u),\Delta^h(\nabla u) \rangle \xi^2\, dx\\
=&-2\int_{\O} \langle \Delta^h(v_\ep(x)^{p(x)-2} \nabla u), \xi
\nabla \xi\Delta^h u \rangle\, dx +\int_{\O}
\xi^2 \Delta^h f
\Delta^h u\, dx\\
=&2\int_{\O} \left(\int_0^1(v_\ep(x+{\bf h}t)^{p(x+{\bf h}t)-2} \nabla u(x+{\bf h}t)\, dt\right)
\frac{\partial}{\partial x_k} (\xi \nabla \xi \Delta^h u) dx\\
&+\int_{\O} \xi^2 \Delta^h f \Delta^h u\, dx
\\
=& II+ III.
\end{align*}

Now, let as fix a ball $B_R$ such that $B_{3R}\subset\subset \O$ and take $\xi\in C^{\infty}_0(\O)$ supported in $B_{2R}$ such that
$0\leq \xi\leq 1,$ $\xi=1$ in $B_R$, $|\nabla \xi|\leq 1/R$ and $|D^2\xi|\leq C R^{-2}$.

\medskip

By Remark \ref{regularidadborde}, there exist a constant $C_1>0$ such that $|\nabla u|\leq C_1$ in $B_{3R}$, therefore we get
\begin{align*}
    II&\leq 2\int_{B_{2R}} \frac{C}{R} |\Delta^h u_{x_k}|\xi\, dx+2\int_{B_{2R}} \frac{C}{R^2} |\Delta^h u|\, dx\\
          &\leq\frac{C}{R}\int_{B_{2R}}  |\Delta^h (\nabla u)|\xi\, dx+C R^{N-2}.
\end{align*}

On the other hand, since $f$ is Lipschitz we have that,
\[
{|f(x+{\bf h})-f(x)|}\leq C_2 h
\]
for some constant $C_2>0$. This implies that,
$$
III\leq C_2 R^N.
$$
Therefore, summing $II$ and $III$,  and using Young's inequality, we have that for any $\delta>0$
\begin{equation}\label{primercota}
I\leq \delta \int_{B_{2R}} |\Delta^h (\nabla u)|^2\xi^2\, dx +C,
\end{equation}
for some constant $C$ depending on $R$ and $\delta$.

On the other hand observe that $I=I_1+I_2$ where,
$$
I_1=\frac1{h}\int_{B_{2R}}
\langle (v_\ep(x+{\bf h})^{p(x+{\bf h})-2} \nabla u(x+{\bf h})-v_\ep(x)^{p(x+{\bf h})-2} \nabla
u(x)), \Delta^h(\nabla u)\rangle \xi^2\,
 dx,$$
 and
$$
I_2=\frac1{h}\int_{B_{2R}}\langle\left(v_\ep(x)^{p(x+{\bf h})}-v_\ep(x)^{p(x)}\right)\frac{\nabla u(x)}{v_\ep(x)^2},\Delta^h(\nabla u) \rangle \xi^2\, dx.
$$

Using that $p(x)$ is Lipschitz and the fact that $|\nabla u(x)|\leq C_1$
we have that, for some $b$ between $p(x+h)$ and $p(x)$,
$$
\frac{1}{h}\left| v_\ep(x)^{p(x+{\bf h})}-v_\ep(x)^{p(x)}\right|= \left|v_\ep(x)^b \log(v_\ep(x))
\frac{p(x+{\bf h})-p(x)}{h}\right|\leq C,
$$
for some constant $C>0$ depending on $p_1,p_2,\ep, C_1$ and the Lipschitz constant of
$p(x)$.

Therefore, we have that
$$-I_2 \leq C C_1 \ep^{-1}  \int_{B_{2R}} |\Delta^h(\nabla u)|  \xi^2\, dx.$$

By \eqref{primercota}, the last inequality and using again Young's
inequality we have that, for any $\delta>0$

\begin{equation}\label{seundacota}
I_1\leq \delta \int_{B_{2R}} |\Delta^h (\nabla u)|^2\xi^2\, dx +C,
\end{equation}
for some constant $C>0$ depending on $p_1,p_2,\ep, C_1$ and the
Lipschitz constant of $p(x)$.

\medskip

To finish the proof, we have to find
a lower bound for $I_1$. By a well known inequality, we have that
\begin{align*}
    \langle (v_\ep(x+{\bf h})^{p(x+h)-2} \nabla u(x+{\bf h})-v_\ep(x)^{p(x+{\bf h})-2}
    \nabla u(x)), (\nabla u(x+{\bf h})-\nabla u(x))\rangle\\
    \geq C_{\ep} |\nabla u(x+{\bf h})-\nabla u(x)|^2 ,
\end{align*}
where
\[
C_{\ep}=\begin{cases}
    \ep^{\nicefrac{p(x+{\bf h})-2}{2}} & \mbox{  if } p(x+{\bf h})\ge 2, \\
    (p(x+{\bf h})-1)\ep^{\nicefrac{p(x+{\bf h})-2}{2}} & \mbox{  if } p(x+{\bf h})\le 2.
\end{cases}
\]
Therefore, using that $p_1> 1,$ we arrive at
$$
I_1\geq \int_{B_{2R}} C h^{-2}|\nabla
u(x+{\bf h})-\nabla u(x)|^2 \xi^2\, dx =  C\int_{B_{2R}} |\Delta^h(\nabla
u(x))|^2 \xi^2\, dx.$$

Finally combining the last inequality with \eqref{seundacota} we
have that,
$$
\int_{B_{R}} |\Delta^h(\nabla u(x))|^2 \, dx\leq C(N,p,f,\ep).
$$
This proves that $u\in H^{2}_{loc}(\O)$.
\end{proof}

\subsection{$H^2-$Global Regularity}
%In this subsection we assume that $\partial\O\in C^{1,\gamma}$ and  $p\in C^{1,\beta}(\Omega)\cap C^{\alpha_0}(\overline{\Omega})$ for some $\gamma,\alpha_0,\beta>0$.

\medskip

Now we want to prove that if $f\in \Lip$ and $g\in
C^{1,\beta}(\partial\O),$ the regularized
equation \eqref{regularizado}
has a weak solution $u\in C^2(\Omega)\cap C^{1,\alpha}(\overline{\O})$ for
an $\alpha\in(0,1).$
We already know, by Remark \ref{regularidadborde}, that $u\in C^{1,\alpha}(\overline{\O})$. Then, we only need  to prove that  $u\in C^2(\Omega)$.

%\begin{align*}
% a_{11}(x,y,u_x,u_y)&= 1+ (p(x,y)-2)\left( \frac{u_x}v\right)^2,\quad v=\left(\varepsilon+|\nabla u|^2\right)^{\frac12},\\
%a_{12}(x,y,u_x,u_y)& =(p(x,y)-2)\frac{u_xu_y}{v^2},\\
%a_{22}(x,y,u_x,u_y)&= 1+ (p(x,y)-2)\left( \frac{u_y}v\right)^2,\\
%a(x,y,u_x,u_y)&=\ln(v)(u_xp_x+u_yp_y)+fv^{2-p}.
%\end{align*}

%On the other hand, $a_{11}=a_{11}(x,y,w,z),$ $a_{12}=a_{22}(x,y,w,z),$
%$a_{22}=a_{22}(x,y,w,z)$ and $a= a(x,z,w)$ are defined in $\O\times\R^2$ and
%$a_{11}, a_{22}, a_{12}   \in C^1(\O\times\R^2).$
%Lastly, we observe that, if $f\in C^{\beta}(\overline{\Omega}),$ we have that $a\in
%C^{\beta}(\Omega\times\R^2)$ and for all $\gamma>0$
%\begin{equation}\label{desa1}
%|a(x,y,w,z)|\le C (1+|w|+|z|)^{1+\gamma}\quad \forall (x,y,w,z)\in\O\times\R^2.
%\end{equation}
%where $C=C(\|f\|_{L_{\infty}(\O)},\gamma)$ is a positive constant.

\medskip

%To see that the solution $u$ of the problem \eqref{nodiv} is regular, we
%will need to prove a global estimate of $\sup_{\O} |u|.$
%To obtain this estimate we will
%use \cite[Theorem 10.5]{GT}. In the following remark, we show
%that, if $f\equiv0,$ the operator $L$ is under the conditions of the mentioned Theorem.
%
%\begin{remark}\label{caso f=0}
%If $f\equiv 0$ then we have that
%\begin{equation}\label{hg}
%|a(x,y,w,z)|\le \frac{h_1(x,y)}{h_2(w,z)}
%\end{equation}
%where $h_1(x,y)=|\nabla p(x,y)|$ and
%$\frac{1}{h_2 (w,z)}=\frac{1}{2}|\ln(\varepsilon+w^2+z^2)|((w^2+z^2)^{1/2}+\tau)$ for
%any $\tau>0.$ Observe that $h_1\in L^2(\O)$ and $h_2\in L^2_{loc}(\R^2).$ Moreover, since
%\begin{align*}
%    \int_{\R^2}h_2^2\,dwdz &
%    %C\int_0^{+\infty}\frac{r}{(r+\tau)^2\ln^2(\varepsilon+r^2)}\,dr\\
%    %&\ge C\frac1{\ln^2\varepsilon} \int_0^1\frac{r}{(r+\tau)^2} \, dr
%    \to +\infty \quad \mbox{as } \tau\to 0^+,
%\end{align*}
%there exists $\tau_0$ such that
%\begin{equation}\label{inthg}
%\int_{\O} h_1^2 dxdy < \int_{\R^2} h_2^2\, dwdz.
%\end{equation}
%\end{remark}

\begin{lema}
 \label{lemaaux1}
Let $\O$ be a bounded domain in $\R^N$ with $\partial \O\in C^{1,\gamma},$ $p\in C^{1,\beta}(\Omega)\cap C^{\alpha_0}(\overline{\Omega})$, $f\in\Lip $
and $g\in C^{1,\beta}(\partial\O).$ Then,
the Dirichlet Problem \eqref{regularizado}
has a solution $u\in C^{2}(\O)\cap C^{1,\alpha}(\overline{\O}).$
\end{lema}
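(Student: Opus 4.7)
The plan is to combine the existence theory quoted in the paper with Schauder interior estimates applied to the linearized equation. The key observation is that for each fixed $\ep>0$, once we know $\nabla u$ is continuous and bounded, the regularized operator in \eqref{regularizado} is uniformly elliptic along the graph of $u$, so freezing the quasilinear structure produces a linear equation with H\"older coefficients to which classical theory applies.

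First, existence of a weak solution $u\in\wp$ of \eqref{regularizado} follows from Theorem 13.3.3 in [DHHR]. Since $f,g\in L^\infty(\O)$, Remark \ref{regularidadborde}(1) gives $u\in L^\infty(\O)$; and since $\partial\O\in C^{1,\gamma}$, $p\in C^{\alpha_0}(\overline\O)$, $g\in C^{1,\beta}(\partial\O)$, Remark \ref{regularidadborde}(3) (with H\"older exponent $\min(\gamma,\beta)$) yields $u\in C^{1,\alpha}(\overline\O)$ for some $\alpha\in(0,1)$. In particular $\nabla u\in C^\alpha(\overline\O)$ and $M:=\|\nabla u\|_{L^\infty(\O)}<\infty$.

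Next, I would expand the divergence in \eqref{regularizado} to recast the equation in non-divergence form at every point where $u$ is twice differentiable:
$$-a_{ij}(x)\,u_{x_i x_j}(x)=F(x),$$
where, writing $v_\ep=(\ep+|\nabla u|^2)^{1/2}$,
$$a_{ij}(x)=v_\ep(x)^{p(x)-2}\delta_{ij}+(p(x)-2)\,v_\ep(x)^{p(x)-4}u_{x_i}(x)u_{x_j}(x),$$
$$F(x)=f(x)+v_\ep(x)^{p(x)-2}\log v_\ep(x)\,\nabla p(x)\!\cdot\!\nabla u(x).$$
The standard inequality $a_{ij}(x)\xi_i\xi_j\ge \min(1,p_1-1)\,v_\ep(x)^{p(x)-2}|\xi|^2$ for all $\xi\in\R^N$, together with $\sqrt{\ep}\le v_\ep\le\sqrt{\ep+M^2}$, gives uniform ellipticity with constants depending only on $\ep,p_1,p_2,M$. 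Because $p,\nabla u\in C^\alpha(\overline\O)$, the coefficients $a_{ij}$ belong to $C^\alpha(\overline\O)$. On any $\O_0\subset\subset\O$ we have $\nabla p\in C^\beta(\O_0)$ by hypothesis, and $f\in\Lip$, so $F\in C^\sigma(\O_0)$ with $\sigma=\min(\alpha,\beta)$.

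Finally, I would apply the classical Schauder interior estimate (for instance Theorem 6.2 in Gilbarg--Trudinger) to the linear equation $-a_{ij}u_{x_ix_j}=F$ on any ball $B\subset\subset\O$, obtaining $u\in C^{2,\sigma}_{loc}(\O)$ and hence $u\in C^2(\O)$. Together with the already established $u\in C^{1,\alpha}(\overline\O)$, this proves the lemma. The main technical point is the H\"older regularity of $F$: this is exactly where the interior hypothesis $p\in C^{1,\beta}(\O)$ is needed, since otherwise the lower-order term $v_\ep^{p(x)-2}\log v_\ep\,\nabla p\cdot\nabla u$ need not be H\"older continuous and Schauder would not apply. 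Everything else in the argument is routine given the global $C^{1,\alpha}$ bound supplied by Remark \ref{regularidadborde}.
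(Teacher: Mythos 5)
Your overall strategy (freeze the coefficients, pass to non-divergence form, use linear H\"older theory) is the same as the paper's, but there is a genuine gap at the two decisive steps. First, you ``expand the divergence \dots\ at every point where $u$ is twice differentiable'': with only $u\in C^{1,\alpha}(\overline{\O})$ in hand, the coefficient $v_\ep^{p(x)-2}$ is merely $C^{\alpha}$, so the identity $\div(v_\ep^{p-2}\nabla u)=v_\ep^{p-2}\Delta u+\nabla(v_\ep^{p-2})\cdot\nabla u$ is not justified in any sense (pointwise or distributional), and you have no information that $u$ is twice differentiable anywhere. The paper closes this by first proving $u\in H^2_{loc}(\O)$ (Theorem \ref{regularidadloc}, via difference quotients); only then does $u$ become a strong solution of the linearized equation a.e., and the non-divergence form is legitimate. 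Your proof never establishes (or invokes) this $H^2_{loc}$ step, and without it the linear equation you want to feed into Schauder theory is not yet known to be satisfied by $u$.

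Second, the citation of the interior Schauder estimate (Theorem 6.2 in Gilbarg--Trudinger) does not do what you claim: that theorem is an \emph{a priori} estimate for functions already assumed to lie in $C^{2,\alpha}$, so it cannot by itself upgrade a $C^{1,\alpha}$ (or even $H^2_{loc}$) solution to $C^2$. What is needed is an existence-plus-uniqueness argument: on a smooth subdomain $\O'\subset\subset\O$ solve the linear Dirichlet problem with the $C^{\alpha}$ coefficients and boundary datum $u|_{\partial\O'}$ classically (the paper uses Theorem 6.13 in \cite{GT}, which produces a solution in $C^{2,\rho}(\O')\cap C(\overline{\O'})$), and then identify that classical solution with $u$ by uniqueness in the class of strong solutions --- which again requires knowing $u\in H^2_{loc}$. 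So the missing ingredient is precisely the local $H^2$ regularity; once you insert it and replace the a priori estimate by the existence/uniqueness theorem, your argument becomes the paper's proof. The rest of your write-up (the use of Remark \ref{regularidadborde} for $u\in C^{1,\alpha}(\overline{\O})$, the ellipticity bounds, and the identification of where $p\in C^{1,\beta}(\O)$ is used to make the right-hand side H\"older) is correct and matches the paper.
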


\begin{proof}
Observe that  by Theorem \ref{regularidadloc}, we know that the solution is in $H^2_{loc}(\O)$. Then for any $\O'\subset\subset\O$ we can derive the equation
 and look the solution of \eqref{regularizado} as the solution of the following equation,
\begin{equation}
    \begin{cases}
        L_\ep u=a(x) &\mbox{in } \O',\\
            u=u &\mbox{on } \partial\O'.
        \end{cases}
    \label{nodiv}
\end{equation}

Here,
\[
L_\ep u=a_{ij}^\ep (x)u_{x_ix_j}
%+a_\ep (x),
\]
with
\begin{equation}\label{defa}
\begin{aligned}
 a_{ij}^\ep(x)&= \delta_{ij}+ (p(x)-2)\frac{u_{x_i} u_{x_j}}{v_\ep^2},\quad v_\ep=\left(\varepsilon+|\nabla u|^2\right)^{\frac12},\\
a_\ep(x)&=\ln(v_\ep)\langle \nabla u, \nabla p\rangle +fv_\ep^{2-p}.
\end{aligned}
\end{equation}

The operator $L_\ep$ is uniformly elliptic in $\Omega,$ since for any
$\xi\in\R^N$
\begin{equation}\label{el}
\min\{ (p_1 -1),1\}|\xi|^2\le a_{ij}^\ep\xi_i \xi_j\le \max\{ (p_2 -1),1\}|\xi|^2.
\end{equation}

On the other hand,  by Remark \ref{regularidadborde},  $u\in C^{1,\alpha}(\overline{\O})$. Then,  $a_{ij}^\ep\in C^{\alpha}(\overline{\O}) $, since $\ep>0$.
Using that $f\in \Lip,$ we have that $a\in
C^{\rho}(\Omega)$ where $\rho=\min(\alpha,\beta)$. If $\partial\O'\in C^2$, as $u$ is the unique solution of \eqref{nodiv},  by Theorem 6.13 in \cite{GT}, we have
 that  $u\in C^{2,\rho}(\O')$. This ends the proof.

%We proceed by truncation of $a$ to reduce the Dirichlet problem
%\[
%\begin{cases}
%Lu=0 & \mbox{in }\O,\\
% u= g & \mbox{on }\partial\O,
%\end{cases}
%\]
%to the case of bounded $a.$ Let
%\[
% \psi_N =\begin{cases}
%          t, &\mbox{if } -N\le t\le N,\\
%          N\sign(t), & \mbox{in other case}.
%         \end{cases}
%\]
%for every $N\in\mathbb{N}$ and we define the truncation of $a$ by
%\[
% a_N(x,y,w,z) := a(x,y,\psi_N(w),\psi_N(z))\quad\forall N\in\mathbb{N}.
%\]
%Then, using \eqref{desa1}, we have that $|a_N|\le(C+N)(1+2N)^{1+\gamma}$
%for every $N\in\mathbb{N},$ therefore, by \cite[Theroem 12.5]{GT}, the Dirichlet problem
%\[
% \begin{cases}
%  L_N(u):=a_{11}(x,y, u_x,u_y)u_{xx} + 2 a_{12}(x,y, u_x,u_y)u_{xy}\\
%  \qquad\qquad+a_{22}(x,y,u_x,u_y)u_{yy}+a_N(x,y,u_x,u_y)= 0 &\mbox{in
%  }\O,\\
% u=g & \mbox{on }\partial\Omega,
% \end{cases}
%\]
%has a unique solution $u_N\in C^{2,\beta}(\O)\cap C(\overline{\O})$ for all
%$N\in\mathbb{N}.$
\end{proof}

\begin{remark}\label{depp1}
        By the $H^2$ global estimate for linear elliptic equations with $L^{\infty}(\O)$ coefficients in two variables
    (see Lemma \ref{cotap11} and \eqref{el}) we have that,
    \[
         \|u\|_{H^2(\O)}\le C\left(\|a_\ep \|_{L^2(\O)} + \|g\|_{H^2(\O)}\right)
    \]
    where $u$ is the solution of \eqref{regularizado} and $C$ is a constant independents of $\ep.$
\end{remark}

\section {Proof of Theorem \ref{H2global} }\label{pt11}
Before proving the theorem, we will need a global bound for the derivatives of the solutions of \eqref{regularizado}.

\begin{lema}\label{lemaux2}
Let $f\in L^{q(x)}(\O)$ with $q'(x)\le p^*(x)$, $g\in \wp,$ $\ep>0$ and $u_{\ep}$ be the weak solution  of
 \eqref{regularizado} then
 \[
 \|\nabla u_{\ep}\|_{\lp}\leq C
 \]
   where $C$ is a constant depending on $\|f\|_{L^{q(\cdot)}(\O)},\|g\|_{\wp}$ but not on $\ep$.
\end{lema}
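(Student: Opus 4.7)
The natural first move is to insert the admissible test function $\varphi := u_\ep - g \in \wpc$ in the weak formulation of~\eqref{regularizado}, obtaining the energy identity
\begin{equation*}
I_\ep := \int_\O (\ep + |\nabla u_\ep|^2)^{\frac{p(x)-2}{2}}|\nabla u_\ep|^2\, dx = \int_\O (\ep + |\nabla u_\ep|^2)^{\frac{p(x)-2}{2}}\nabla u_\ep\cdot\nabla g\, dx + \int_\O f(u_\ep - g)\, dx.
\end{equation*}
The plan is to bound $I_\ep$ from below by a positive multiple of the modular $\varrho_{p(\cdot)}(|\nabla u_\ep|)$ (with $\ep$-independent constants), estimate each of the two terms on the right, absorb, and finally pass from the modular to the Luxemburg norm.

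For the lower bound I would split $\O$ according to whether $|\nabla u_\ep|^2\ge\ep$ or not. On the first set $\ep+|\nabla u_\ep|^2\le 2|\nabla u_\ep|^2$, so the integrand dominates $c_0|\nabla u_\ep|^{p(x)}$ with $c_0=\min\{2^{(p_1-2)/2},1\}$ independent of $\ep$. On the complement, $|\nabla u_\ep|^{p(x)}\le 1$ since $\ep\le 1$, contributing at most $|\O|$. Hence $I_\ep\ge c_0\,\varrho_{p(\cdot)}(|\nabla u_\ep|) - c_0|\O|$. For the drift term on the right I would use the pointwise bound $(\ep+|\nabla u_\ep|^2)^{(p(x)-2)/2}|\nabla u_\ep||\nabla g|\le(\ep+|\nabla u_\ep|^2)^{(p(x)-1)/2}|\nabla g|$ followed by Young's inequality with exponents $p(x)/(p(x)-1)$ and $p(x)$; exploiting $\ep\le 1$ this yields $\delta\,\varrho_{p(\cdot)}(|\nabla u_\ep|) + C(\delta)\bigl(\varrho_{p(\cdot)}(|\nabla g|)+|\O|\bigr)$ for any $\delta>0$. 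For the forcing term, the variable-exponent H\"older inequality, the chain of embeddings $\wp\hookrightarrow \lpe\hookrightarrow L^{q'(\cdot)}(\O)$ (admissible because $q'(x)\le p^*(x)$), and Poincar\'e's inequality on $\wpc$ (Lemma~\ref{poinc}) combine to give
\begin{equation*}
\left|\int_\O f(u_\ep - g)\,dx\right| \le C\,\|f\|_{L^{q(\cdot)}(\O)}\bigl(\|\nabla u_\ep\|_{\lp} + \|\nabla g\|_{\lp}\bigr).
\end{equation*}

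Choosing $\delta$ small enough to absorb into $I_\ep$ and writing $A_\ep := \|\nabla u_\ep\|_{\lp}$, the estimates combine into
\begin{equation*}
\varrho_{p(\cdot)}(|\nabla u_\ep|) \le K_1 + K_2\,A_\ep,
\end{equation*}
with $K_1,K_2$ depending only on $\|f\|_{L^{q(\cdot)}(\O)}$, $\|g\|_{\wp}$ and $p_1,p_2,|\O|$. The delicate last step, which I expect to be the main technical point, is converting this modular estimate into a norm bound: if $A_\ep\le 1$ we are done, while if $A_\ep>1$ the standard modular--norm inequality gives $A_\ep^{p_1}\le\varrho_{p(\cdot)}(|\nabla u_\ep|)$, so that $A_\ep^{p_1}\le K_1+K_2 A_\ep$. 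Since $p_1>1$ this super-linear inequality forces $A_\ep$ to stay bounded by a constant independent of $\ep$, which is the content of the lemma. The bookkeeping to ensure every constant along the way is genuinely independent of $\ep\in(0,1]$ is the only subtle point.
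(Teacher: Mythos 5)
Your proposal is correct and follows essentially the same route as the paper: an energy estimate built on the test function $u_\ep-g$, then H\"older's inequality, the embeddings $\wp\hookrightarrow\lpe\hookrightarrow\lqp$ (using $q'(x)\le p^*(x)$) together with Poincar\'e's inequality for the forcing term, and finally the modular-to-norm conversion exploiting $p_1>1$. The only difference is cosmetic: the paper derives the energy inequality from the convexity of $J(v)=\int_\O\frac{1}{p(x)}(|\nabla v|^2+\ep)^{p(x)/2}\,dx$, which disposes of the $\nabla g$ cross term and the lower bound in one stroke, whereas you handle them by hand via Young's inequality and the splitting according to $|\nabla u_\ep|^2\ge\ep$.
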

\begin{proof}
    Let
    \[
    J(v):=\int_{\O} \frac{1}{p(x)}(|\nabla v|^2+\ep)^{\nicefrac{p(x)}2}\, dx.
    \]
    By the convexity of $J$ and using \eqref{regularizado} we have that,
    \begin{align*}
        J(u_{\ep})&\leq J(g)-\int_{\O}(|\nabla u_{\ep}|^2+\ep)^{(p-2)/2} \nabla u_{\ep} (\nabla g-\nabla  u_{\ep})\, dx\\
        &\leq C\left(1+\int_{\O} f (u_{\ep}-g)\,dx\right) \\
        &\leq C\left(1+\|f\|_{L^{q(\cdot)}(\O)}\|u_{\ep}-g\|_{L^{q'(\cdot)}(\O)}\right)\\
        &\leq C\left(1+\|f\|_{L^{q(\cdot)}(\O)}\|\nabla u_{\ep}-\nabla g\|_{L^{p(\cdot)}(\O)}\right),
    \end{align*}
    where in the last inequality we are using that $\wp\hookrightarrow L^{p^*(\cdot)}(\O)$ continuously and Poincare's inequality.

    Thus we have that there exist a constant independent of $\ep$ such that,
    $$
    \int_{\O}|\nabla u_{\ep}|^{p(x)}\, dx \leq C(1+\|\nabla u_{\ep}\|_{\lp}),
    $$
    and using the properties of the $\lp-$ norms this means that
    $$
    \|\nabla u_{\ep}\|^m_{\lp} \leq C(1+\|\nabla u_{\ep}\|_{\lp}),
    $$
    for some $m> 1$. Therefore $\|\nabla u_{\ep}\|_{\lp}$ is bounded
    independent of $\ep$.
\end{proof}

\medskip

To prove Theorem \ref{H2global}, we will use the results of Section \ref{nodeg}. Therefore, we will first need to assume that
$p\in C^{1.\beta}(\Omega)\cap C(\overline{\Omega}).$

\begin{teo}\label{H2globalcbeta}
Let $\O$ be a bounded domain in $\R^2$ with $C^2$ boundary, $p\in
C^{1.\beta}(\Omega)\cap C^{\alpha_0}(\overline{\Omega})$ with
$p(x)\ge p_1>1$, $g\in H^2(\O)$ and $u$ be the weak solution  of
\eqref{problema}. If $f$ satisfies (F1) and (F2)
%\begin{enumerate}
%\item[(F1)]
%$f\in L^{q(x)}(\O)$ with $q(x)\ge q_1> 2$ in the set $\{x\in\O\colon p(x)\le2\};$
%\item[(F2)] $f\equiv0$ in the set $\{x\in\O\colon p(x)>2\}$.
%\end{enumerate}
then $u\in H^2(\O).$
\end{teo}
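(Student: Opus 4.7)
The strategy is to realize $u$ as the weak $H^2$-limit of classical solutions $u_{\ep,n}$ of the non-degenerate regularized problem \eqref{regularizado} with smoothed data $(f_n, g_n)$ approximating $(f,g)$. Since Lemma \ref{lemaaux1} requires $f\in\Lip$ and $g\in C^{1,\beta}(\partial\Omega)$, first pre-regularize: choose $f_n\in\Lip(\Omega)$ with $f_n\to f$ in $L^{q(\cdot)}(\Omega)$ preserving (F1)--(F2) (for instance, multiply a mollification by a Lipschitz cutoff supported in $\{p<2\}$), and $g_n\in C^{\infty}(\overline\Omega)$ with $g_n\to g$ in $H^2(\Omega)$. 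For each $\ep\in(0,1]$, Lemma \ref{lemaaux1} yields $u_{\ep,n}\in C^2(\Omega)\cap C^{1,\alpha}(\overline\Omega)$, and the two-dimensional linear $H^2$-estimate from Remark \ref{depp1}, applied to the non-divergence form \eqref{nodiv}, gives
$$\|u_{\ep,n}\|_{H^2(\Omega)} \le C\bigl(\|a_{\ep,n}\|_{L^2(\Omega)} + \|g_n\|_{H^2(\Omega)}\bigr),$$
where $a_{\ep,n} = \ln(v_\ep)\langle\nabla u_{\ep,n},\nabla p\rangle + f_n v_\ep^{2-p}$ with $v_\ep = (\ep+|\nabla u_{\ep,n}|^2)^{1/2}$, and $C$ depends only on $p_1,p_2,\Omega$.

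The crux is a uniform bound on $\|a_{\ep,n}\|_{L^2(\Omega)}$. For the logarithmic term, $|\ln(v)|v\le C(1+v^{1+\delta})$ for any $\delta>0$ and $|\nabla u|\le v_\ep$ yield $\|\ln(v_\ep)\langle\nabla u_{\ep,n},\nabla p\rangle\|_{L^2}\le C(1+\|\nabla u_{\ep,n}\|_{L^{2+2\delta}}^{1+\delta})$. The term $f_n v_\ep^{2-p}$ vanishes on $\{p>2\}$ by (F2); on $\{p\le 2\}$ we have $0\le 2-p\le 2-p_1<1$, and Hölder with conjugate exponents $q_1/2$ and $\mu=2q_1/(q_1-2)$ gives $\|f_n v_\ep^{2-p}\|_{L^2}\le C\|f_n\|_{L^{q_1}}\bigl(1+\|\nabla u_{\ep,n}\|_{L^\tau}^{2-p_1}\bigr)$ with $\tau=\mu(2-p_1)$. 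Both estimates feed into the two-dimensional Gagliardo--Nirenberg inequality
$$\|\nabla u\|_{L^s(\Omega)}^{s} \le C\|u\|_{H^2(\Omega)}^{s-p_1}\|\nabla u\|_{L^{p_1}(\Omega)}^{p_1}\qquad (s\ge p_1),$$
which, combined with the uniform $L^{p(\cdot)}$-bound from Lemma \ref{lemaux2}, converts each term to $C(1+\|u_{\ep,n}\|_{H^2}^\gamma)$. The decisive algebraic check is $\gamma<1$: for the $fv^{2-p}$ term the resulting exponent is $(2-p_1)-p_1/\mu = 2-p_1(1+1/\mu)$, strictly less than $1$ precisely because $p_1>1$; an analogous sub-unit exponent for the log term holds when $\delta$ is chosen small enough. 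Young's inequality then absorbs the $H^2$-term on the right, yielding $\|u_{\ep,n}\|_{H^2(\Omega)}\le C$ uniformly in $\ep,n$.

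Finally, weak compactness extracts a subsequence $u_{\ep_k,n_k}\rightharpoonup w$ in $H^2(\Omega)$; the standard monotonicity/Minty argument for the $p(x)$-Laplacian (using the monotonicity of $\xi\mapsto(\ep+|\xi|^2)^{(p-2)/2}\xi$ and the strong $W^{1,p(\cdot)}$ bound from Lemma \ref{lemaux2}) identifies $w$ with the unique weak solution $u$ of \eqref{problema}, so $u\in H^2(\Omega)$. The main obstacle is precisely the exponent bookkeeping in the bootstrap: verifying $\gamma<1$ requires the quantitative interplay between $p_1>1$, $q_1>2$, and the 2D Gagliardo--Nirenberg scale. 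The two-dimensional assumption is essential here because $H^1\hookrightarrow L^s$ for every finite $s$ provides the flexibility in the interpolation that makes $\gamma<1$ achievable at all.
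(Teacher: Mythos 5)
Your proposal follows essentially the same route as the paper: approximate the data, solve the non-degenerate regularized problem, write it in non-divergence form, apply the two-dimensional linear $H^2$ estimate of Remark \ref{depp1}, bound $\|a_\ep\|_{L^2}$ by strictly sublinear powers of $\|u_\ep\|_{H^2}$ using the uniform $L^{p(\cdot)}$ gradient bound of Lemma \ref{lemaux2} (with the decisive exponents $<1$ coming from $p_1>1$, $q_1>2$ and the two-dimensional embedding), absorb, and pass to the limit. The only differences are cosmetic — you use constant-exponent H\"older plus Gagliardo--Nirenberg interpolation where the paper uses a variable exponent $\widetilde q(x)$ and the embedding $H^2\hookrightarrow W^{1,r}$ for all finite $r$, and a Minty argument where the paper invokes the pointwise inequality \eqref{desig} — so the argument is correct and matches the paper's proof.
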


\begin{proof}
Let $f_\ep\in\Lip$ and $g_\varepsilon\in
C^{2,\alpha}(\overline{\O})$ such that
    \begin{align*}
        f_\varepsilon\to f  &\mbox{ strongly in } \lq,\\
        g_\varepsilon\to g  &\mbox{ strongly in } H^2(\O),
    \end{align*}
    as $\varepsilon\to
    0.$ Observe that, since $f(x)=0$ if $p(x)>2,$ we can take $f_{\ep}\equiv 0$ in $\{x\in\O\colon p(x)>2\}$.

    Now, let us consider the solution of \eqref{regularizado} as the solution of
    \[
    \begin{cases}
        a_{11}^\ep(x)\frac{\partial^2 u_\varepsilon}{\partial x_1^2} + 2
        a_{12}^\ep(x)\frac{\partial^2 u_\varepsilon}{\partial x_1\partial
        x_2}+ a_{22}^\ep(x)
    \frac{\partial^2 u_\varepsilon}{\partial x_2^2}=a_\ep(x)
         &\mbox{in } \O,\\
        u_\varepsilon = g_\varepsilon &\mbox{on } \partial\O,
    \end{cases}
    \]
    where $a_{11}^\ep,a_{22}^\ep,a_{12}^\ep, a_\ep$ are defined as in Lemma \ref{lemaaux1}, substituting $f$ and $g$ by $f_{\ep}$
    and $g_{\ep}$ respectively.
    By Lemma \ref{lemaaux1} we know that $u_{\ep}\in C^2(\O)\cap C^{1,\alpha}(\overline{\O})$.

    First we will prove the $\{u_\ep\}Ì£_{\ep\in(0,1]}$ is bounded in $H^2(\O).$ By Remark \ref{depp1}, we have that
    \begin{equation}\label{cotaglobal}
        \begin{aligned}\|u_{\ep}\|_{H^2(\O)}&\leq C(\|a_\ep(x)\|_{L^2(\O)}+\|g_{\ep}\|_{H^2(\O)})\\
            &\leq C(\|\ln(v_\ep)\nabla u_{\ep}\nabla p\|_{L^2(\O)}+\|f_{\ep}v^{2-p}\|_{L^2(\O)}+\|g_{\ep}\|_{H^2(\O)}).
        \end{aligned}
    \end{equation}

    Taking $\O_1=\{x\in\O:|\nabla u_{\ep}(x)|>1\}$, using that $p(x)$ is Lipschitz and  H\"{o}lder's inequality, we have
        \begin{equation}\label{cotalog}
        \|\ln(v_\ep)\nabla u_{\ep}\nabla p\|_{L^2(\O)}\leq C \|\ln^2(v_\ep)\nabla u_{\ep}\|_{L^{p'(\cdot)}(\O_1)}^{\nicefrac{1}{2}}
        \|\nabla u_{\ep}\|_{L^{p(\cdot)}(\O_1)}^{\nicefrac{1}{2}} +C.
    \end{equation}

    On the other hand, since $q(x)\geq q_1>2$, we have that $q'(x)\leq p^*(x)$. Then, as $\|f_{\ep}\|_{\lq}$ and  $\|g_{\ep}\|_{H^2(\O)}$ are bounded independent of $\ep$,
    using  Lemma \ref{lemaux2} we conclude that $\|\nabla u_{\ep}\|_{\lp}$ is uniformly bounded.

    Observe that, for all $s>0$ there exist a constant $C>0$ such that
    $$
    \ln(v_\ep)\leq C v_\ep^{s/2}< C |\nabla u_{\ep}|^{s/2}\quad \mbox{ in } \O_1,
    $$
    thus
    \begin{align*}
    \|\ln^2(v_\ep)|\nabla u_{\ep}|\|_{L^{p'(\cdot)}(\O_1)}&\leq C \||\nabla u_{\ep}|^{1+s}\|_{L^{p'(\cdot)}(\O_1)}\\
    &\leq C \|\nabla u_{\ep}\|^{(1+s)}_{L^{p'(\cdot)(1+s)}(\O_1)}\\
    &\leq  C \| u_{\ep}\|^{(1+s)}_{{H^2}(\O_1)}.
    \end{align*}
    In the last line, we are using that $2^*=\infty$, since $N=2$.

    Then, by the last inequality, \eqref{cotaglobal} and \eqref{cotalog}, we get
    \begin{equation}\label{casiultima}
    \|u_{\ep}\|_{H^2(\O)}\leq C \left(\|u_{\ep}\|^{\nicefrac{(1+s)}{2}}_{H^2(\O)}+\|f_{\ep}v_\ep^{2-p}\|_{L^2(\O)}+1\right).
    \end{equation}

    Taking
    $$
    A_1=\{x\in\O: p(x)=2\}\quad \mbox{ and }\quad A_2=\{x\in\O: p(x)<2\}
    $$
    and using that $f_{\ep}\equiv 0$ in $\{x\in\O: p(x)>2\}$, we have that
    $$
    \|f_{\ep}v_\ep^{2-p}\|_{L^2(\O)}\leq \|f_{\ep}\|_{L^2(A_1)}+\|f_{\ep}v_\ep^{2-p}\|_{L^2(A_2)}.
    $$
    Since  $\|f_{\ep}\|_{L^2(A_1)}$ is bounded, to prove that $\{u_\ep\}_{\ep\in(0,1]}$ is bounded in $H^2(\O)$, we only have to find a bound of $\|f_{\ep}v_\ep^{2-p}\|_{L^2(A_2)}$.

Let  as define in $A_2$ the function
\[
\widetilde{q}(x)=\begin{cases}
\frac{1}{2p(x)-3}+1 &\quad \mbox{ if }\quad
\frac{1}{q(x)}+\frac{3}{2}\leq p(x)<2,\\
\frac{q(x)}{2} +1 &\quad \mbox{ if }\quad
p(x)<\frac{1}{q(x)}+\frac{3}{2}.
\end{cases}
\]
It is easy to see that $2<\widetilde{q}(x)\leq q(x)$ for any $x\in A_2$.

On the other hand, let us denote $\mu(x)={\frac{2\widetilde{q}(x)}{\widetilde{q}(x)-2}}$ and $\gamma(x)=\mu(x)(2-p(x))$ then
$$1< 1+\frac{2}{ q_2}\leq \gamma(x) \leq \max\left\{2,2+\frac{8}{q_1-2}\right\}\quad \forall x\in A_2.$$

Now, using H\"{o}lder's inequality  with exponent $\widetilde{q}(x)/2$, we have
\begin{equation}\label{casiultima2}
    \|f_{\ep}v_\ep^{2-p}\|_{L^2(A_2)}\leq C \|f_{\ep}\|_{L^{\widetilde{q}(\cdot)}(A_2)} \|v_\ep^{2-p}\|_{L^{\mu(\cdot)}(A_2)}.
\end{equation}
%Observe that it is possible to apply H\"{o}lder's inequality since
%in the open set $A_2$, $\widetilde{q}(x)/2>1$ (this is possible even if $\widetilde{q}(x)$
%approaches the value two at the boundary of $A_2$).

Then, if $\|v_\ep\|_{L^{\gamma(\cdot)}(A_2)}\leq 1$ we have $\|v_\ep^{2-p}\|_{L^{\mu(\cdot)}(A_2)}\le1$ and since $\widetilde{q}(x)\le q(x)$ we get
\[
    \|f_{\ep}v_\ep^{2-p}\|_{L^2(A_2)}\leq C.
\]
 If $\|v\|_{L^{\gamma(\cdot)}(A_2)}\geq 1$ ,  we have
\begin{equation}\label{casiultima3}
    \|v_\ep^{2-p}\|_{L^{\mu(\cdot)}(A_2)}\leq \|v_\ep\|^{2-p_1}_{L^{\gamma(\cdot)}(A_2)}\leq C (1+\|\nabla u_{\ep}\|^{2-p_1}_{L^{\gamma(\cdot)}(A_2)}),
\end{equation}
where in the last inequality we are using that $\ep\leq 1.$

Since $2^*=\infty$ and  $1<\gamma_1 \leq \gamma(x)\leq
\gamma_2<\infty$ , by the Sobolev embedding inequality, we have that
%%%%%%%%%%%%%%%%%%%%%%%%%%%%%%%%%%%%%%%%%%%%%%%%%%%%%%%%%%%%%%%%%%%%
%In fact, if $p\leq 3/2$ in $A_2$ the inequality holds for any
%$2<\widetilde{q}\leq q, $ and if $2>p>3/2$ in $A_2$ the inequality holds for
%any $2<\widetilde{q}<2/(2p(x)-3)$ {\color{red}(esta ultima desigualdad no vale
%con estricto si p no esta alejado de 2 o sea no existe ningun q que
%lo cumpla, me parece que $\widetilde{q}$ va a tener que ser funcion, revisar
%toda la cuenta, si no hay que pedir $p(x)\leq p_2<2$) Igual capaz
%que sale mas facil si usamos que ya probamos que el gradiente esta
%uniformemente acotado en $L^p$, sin necesidad de usar poincare y
%sobolev en el segundo caso que }.
%%%%%%%%%%%%%%%%%%%%%%%%%%%%%%%%%%%%%%%%%%%%%%%%%%%%%%%%%%%%%%%%%%%%%
%Therefore,
$$
\|\nabla u_{\ep}\|^{2-p_1}_{L^{\gamma(\cdot)}(A_2)}\leq C \|u_{\ep}\|^{2-p_1}_{H^2(A_2)}\leq C \|u_{\ep}\|^{2-p_1}_{H^2(\Omega)}.
$$
Combining this last inequality with inequalities \eqref{casiultima3}, \eqref{casiultima2}, \eqref{casiultima} and the fact that $\widetilde{q}(x)\leq q(x),$ we get
$$
\|u_{\ep}\|_{H^2(\O)}\leq  C (\|u_{\ep}\|^{\nicefrac{(1+s)}{2}}_{H^2(\O)}+\|u_{\ep}\|^{2-p_1}_{H^2(\Omega)}+1).
$$

Finally, we get that for any $0<s<1$ there exist a constant $C=C(p,g,f,s)$ such that
\[
\|u_{\ep}\|_{H^2(\O)}\leq C.
\]
Then, there exist a subsequence still denoted
$\{u_\ep\}_{\ep\in(0,1]}$ and $u\in H^1(\Omega)$ such that
\begin{align*}
    u_{\ep}\to u &\mbox{ strongly in } H^1(\O),\\
    u_{\ep}\rightharpoonup u  &\mbox{ weakly in } H^2(\O),
\end{align*}
It is clear that $u$ satisfies the boundary condition.

Lastly, by Proposition 3.2 in \cite{BN}, there exist a constant $M>0$ independent of $\ep$ such that,
\begin{equation}\label{desig}
|(\ep+|\nabla u_{\ep}|^2)^{\frac{p(x)-2}{2}}\nabla u_{\ep}-(\ep+|\nabla u|^2)^{\frac{p(x)-2}{2}}\nabla u|\le M |\nabla(u_{\ep}-u)|^{p(x)-1}
\end{equation}
for all $  x\in \O.$ Then, passing to the limit in the weak formulation of \eqref{regularizado} and using the above inequality, we have that
$$\int_{\O} |\nabla u|^{p(x)-2} \nabla u\nabla \varphi\, dx =\int_{\O}  f \varphi\, dx$$
for any $\varphi\in C_0^{\infty}(\Omega)$. Therefore $u\in H^2(\Omega)$ and solves \eqref{problema}.
\end{proof}

\bigskip
Now, we are able  to prove the theorem.

\begin{proof}[{\bf Proof of Theorem \ref{H2global}}]
    First, we consider the case $p\in C^1(\overline{\O}).$
    Let $p_{\ep}\in C^{\infty}(\overline{\Omega})$ such that $p_{\ep}\to p$ in $C^1(\O)$.
    Now, we define
    \begin{equation}\label{fep}
    f_{\ep}(x)=\begin{cases}f(x) &\mbox{ if } p_{\ep}(x)\leq 2,\\
        0 &\mbox{ if } p_{\ep}(x) > 2.\end{cases}
    \end{equation}
    Observe that $f_{\ep}\to f$ in $\lq$ as $\ep\to 0$.

    Then, by Theorem \ref{H2globalcbeta}, the solution $u_{\ep}$  of \eqref{problema} (with $p_{\ep}$ and $f_{\ep}$ instead of $p$ and $f$) is bounded in $H^2(\O)$
    by a constant independent of $\ep$. Therefore, there exist a subsequence still denoted $\{u_{\ep}\}_{\ep\in(0,1]}$ and $u\in H^2(\O)$ such that
    \begin{equation}\label{conv}
        \begin{aligned}
            u_{\ep}\to u &\quad \mbox{ in } H^1(\O),\\
            u_{\ep}\rightharpoonup u &\quad \mbox{ weakly in } H^2(\O).
        \end{aligned}
    \end{equation}
    It remains to prove that $u$ is a solution of \eqref{problema}. Let $\varphi\in C^{\infty}_0(\O)$, then
    \begin{equation}\label{ultimo1}
        \begin{aligned}
            \int_{\O} f_\ep\varphi \, dx =& \int_{\O}|\nabla u_{\ep}|^{p_{\ep}(x)-2} \nabla u_{\ep}\nabla \varphi \, dx\\
            =&\int_{\O} |\nabla u_{\ep}|^{p(x)-2} \nabla u_{\ep}\nabla \varphi \, dx\\
            +&\int_{\O} (|\nabla u_{\ep}|^{p_{\ep}(x)-2}-|\nabla u_{\ep}|^{p(x)-2}) \nabla u_{\ep}\nabla \varphi \, dx .
        \end{aligned}
        \end{equation}
    Therefore, using that $H^2(\O)\hookrightarrow \wp$ compactly, we have that
    \begin{equation}\label{ultimo22}
        \int_{\O} |\nabla u_{\ep}|^{p(x)-2} \nabla u_{\ep}\nabla \varphi \, dx\to \int_{\O} |\nabla u|^{p(x)-2} \nabla u\nabla \varphi \, dx.
    \end{equation}

    On the other hand, we have
    \[
        |\nabla u_\ep(x)|^{p_{\ep}(x)-1}-|\nabla u_\ep(x)|^{p(x)-1}= |\nabla u_\ep(x)|^{b_\ep(x)} \log(|\nabla u_\ep(x)|)
        (p_{\ep}(x)-p(x)),
    \]
    where $b_\ep(x)=p_{\ep}(x)\theta+(1-\theta)p(x)-1$ for some $0<\theta<1$.
    Therefore,  using that $2^*=\infty$ and that $p_{\ep}\to p$ uniformly, we obtain
    \begin{equation}\label{uleq}
        \int_{\O} (|\nabla u_{\ep}|^{p_{\ep}(x)-2}-|\nabla u_{\ep}|^{p(x)-2}) \nabla u_{\ep}\nabla \varphi \, dx\to 0.
        \end{equation}
    Then, using that $f_{\ep}\to f$ in $\lq$,  \eqref{ultimo1}, \eqref{ultimo22} and the \eqref{uleq} we conclude that $u$ is a solution of \eqref{problema}.

    \medskip

    Now, we consider the case $p\in \Lipc.$ By Lemmas \ref{ext} and \ref{lipyc1} there exists $p_{\ep}\in C^1(\overline{\Omega})$ such that
    $|\Omega\setminus \Omega_0|<\ep$ where
    \[
    \O_0=\{x\in\O \colon p_\ep(x)= p(x) \mbox{ and } \nabla p_\ep(x)=\nabla p(x)\}.
    \]

    We define $f_\ep$ as in \eqref{fep}. Then, the solution $u_{\ep}$  of \eqref{problema} with $p_{\ep}$ and $f_{\ep}$ instead
    of $p$ and $f$ is bounded in $H^2(\O)$ by a constant independent of $\ep$.
    Therefore there exist a subsequence still denoted $\{u_{\ep}\}_{\ep\in(0,1]}$ and $u\in H^2(\O)$ satisfying \eqref{conv}.

    Lastly, we prove that $u$ is a solution of \eqref{problema}.
    Let $\varphi\in C^{\infty}_0(\O).$ By H\"{o}lder inequality, since $2^*=\infty$ and by (3) of Lemma \ref{lipyc1}
    we have
    \begin{align*}
        \int_{\O\setminus\O_0}& (|\nabla u_{\ep}|^{p_{\ep}(x)-2}-|\nabla u_{\ep}|^{p(x)-2}) \nabla u_{\ep}\nabla \varphi \, dx \\ &
        \leq C(\|\nabla u_{\ep}\|_{L^{p_{\ep}}(\O)} \|1\|_{L^{p_{\ep}}(\O\setminus\O_0)}+\|\nabla u_{\ep}\|_{L^{p}(\O)} \|1\|_{L^{p}(\O\setminus\O_0)})
        \\ &\leq C\|u_{\ep}\|_{H^2(\O)}( \|1\|_{L^{p_{\ep}}(\O\setminus\O_0)}+ \|1\|_{L^{p}(\O\setminus\O_0)}).
    \end{align*}
    Then, since $\|u_{\ep}\|_{H^2(\O)}$ is bounded independent of $\ep$ and $|\O\setminus\O_0|<\ep$ we obtain that
    \[
    \int_{\O\setminus\O_0} (|\nabla u_{\ep}|^{p_{\ep}(x)-2}-|\nabla u_{\ep}|^{p(x)-2}) \nabla u_{\ep}\nabla \varphi \, dx\to 0.
    \]
    Therefore,  since  \eqref{ultimo1}, \eqref{ultimo22} again  hold,  using that $f_{\ep}\to f$ in $\lq,$ and the above equation,
    we conclude that $u$ is a solution of \eqref{problema}.
\end{proof}

%%%%%%%%%%%%%%%%%%%%%%%%%%%%%%%%%%%%%%%%%%%%%%%%%%%%%%%%%%%%%%%%%%%%%%%%%%%%%%%%%%%%
\section{The convex case}\label{convex}

Lastly, we want to prove that the solution is in $H^2(\O)$ if we
only assume that  $\partial\O$ is convex. We want to remark here
that this result generalize the one in Theorem 2.2 in \cite{LB} in
two ways. In that paper the authors consider the case $p=constant$
and $g=0$. Instead, we are allowed to cover the case where $g$ is
any function in $H^2(\O)$ and $p(x)\in \Lipc$.

\begin{remark}\label{domain}
Let $\O$ be a convex set and $p:\O\to[1,\infty)$  be $\log-$continuous in $\overline{\O}$. Then, there exists
a sequence  $\{\O_m\}_{m\in\mathbb{N}}$ of convex subset of $\O$ with $C^2$ boundary such that $\O_m\subset\O_{m+1}$ for any $m\in\mathbb{N}$ and
$|\O\setminus\O_m|\to 0.$
\begin{enumerate}
\item
Then, there exists a constant $C$ depending on $p(x), |\O| $ such that
\[
\|v\|_{L^{p(\cdot)}(\O_m)}\le C \|\nabla v\|_{L^{p(\cdot)}(\O_m)} \quad\forall v\in W^{1,p(\cdot)}_0(\O_m),
\]
for any $m\in\mathbb{N}.$ This follows by Theorem 3.3 in \cite{KR}, using that $\O_m\subset\O_{m+1}$ for any $m\in\mathbb{N}.$

\item The Lipschitz constants of $\O_m$ ($m\in\mathbb{N}$) are uniformly bounded (see Remark 2.3 in \cite{LB}).
Therefore, the extension operators
\[
E_{1,m}: W^{1,p(\cdot)}(\O_m) \to W^{1,p(\cdot)}(\O) \mbox{ and } E_{2,m}:H^2(\O_m)\to H^2(\O)
\]
define as Theorem 4.2 in \cite{D3} satisfy that  $\|E_{1,m}\|$ and
$\|E_{2,m}\|$ are uniformly bounded.
\item By (2) and  Corollary 8.3.2 in \cite{DHHR},
there exists a constant $C$ independent of $m$ such that
\[
\|v\|_{L^{p^*(\cdot)}(\O_m)}\le C \|v\|_{W^{1,p(\cdot)}(\O_m)} \quad\forall v\in W^{1,p(\cdot)}(\O_m),
\]
for any $m\in\mathbb{N}.$
\end{enumerate}
We want to remark that all the constants of the above inequalities
are independent of $p_1$ (see Section \ref{coments} for the
applications).
\end{remark}

\begin{proof}[{\bf Proof of Theorem \ref{H2convexa}}]
    We begin taking $\{\O_m\}_{m\in\mathbb{N}}$ as in Remark \ref{domain} and  $u_m$  the solution of
\[
    \begin{cases}
       - \Delta_{p(x)} u_m =f  & \mbox{ in } \O_m,\\
         u_m =g  & \mbox{ on } \partial \Omega_m.
    \end{cases}
\]
By Theorem \ref{H2global}, $u_m\in H^2(\Omega_m)$ for any $m\in\mathbb{N}$. Moreover, $u_m$ solves
\[
\begin{cases}
        L^m u_m=a_{ij}^m(x)u_{m,{x_ix_j}}=a^m(x) &\mbox{in } \O_m,\\
            u_m=g &\mbox{on } \partial\O_m,
        \end{cases}
\]
    with
\begin{align*}
 a_{ij}^m(x)&= \delta_{ij}+ (p(x)-2)\frac{u_{m,{x_i}}(x) u_{{m,x_j}}(x)}{|\nabla u_m(x)|^2},\\
a^m(x)&=\ln(|\nabla u_m(x)|)\langle \nabla u_m(x), \nabla p(x)\rangle +f(x)|\nabla u_m(x)|^{2-p(x)}.
\end{align*}

Then $v_m=u_m-g$ solves
\[
    \begin{cases}
        L^m v_m=-L^mg+a^m(x) &\mbox{in } \O_m,\\
            v_m=0 &\mbox{on } \partial\O_m.
        \end{cases}
\]
Thus, using that $v_m\in H^2(\O_m)\cap H^1_0(\Omega_m)$ and since the coefficients $a^m_{ij}(x)$ are bounded independent of $m$,
we can argue as in Theorem 2.2 in \cite{LB} and obtain,
\begin{equation}
    \begin{aligned}
    &\|v_m\|_{H^2(\O_m)}\leq C\|-L^m g + f|\nabla u_m|^{2-p(\cdot)}+\ln(|\nabla u_m|)|\nabla u_m| \|_{L^2(\O_m)}\\
    &\leq C\left(\||\nabla u_m|^{2-p(\cdot)}\|_{L^2(\O_m)}+\|\ln(|\nabla u_m|)|\nabla u_m| \|_{L^2(\O_m)} +1 \right)
\end{aligned}
\label{cotavm}
\end{equation}
where the constant $C$ is independent of $m.$

As in Lemma \ref{lemaux2} we can prove, using Remark \ref{domain}
(1) and (3), that the norms $\|\nabla u_m\|_{L^{p(\cdot)}(\O_m)}$
are uniformly bounded. Therefore, proceeding as in Theorem \ref{H2globalcbeta} we obtain
\begin{equation}
    \begin{aligned}
    &\|\ln(|\nabla u_m|)|\nabla u_m| \|_{L^2(\O_m)}+\|f|\nabla u_m|^{2-p}\|_{L^2(\O_m)}\\
    &\leq C\left(\|\nabla u_m\|^{(1+s)/2}_{L^{p'(\cdot)(1+s)}(\O_{1,m})}+ \|\nabla u_m\|^{2-p_1}_{L^{\gamma(\cdot)}(A_{2,m})}+1\right),
\end{aligned}
\label{cotaum}
%C(\|f|\nabla v_m|^{2-p}\|_{L^2(\O_m)}+\|\ln(|\nabla v_m|)|\nabla v_m| \|_{L^2(\O_m)})
%+(\|f|\nabla g|^{2-p}\|_{L^2(\O_m)}+\|\ln(|\nabla g|)|\nabla g| \|_{L^2(\O_m)})
\end{equation}
with $C$ independent of $m,$ where
\[
\O_{1,m}=\{x\in\O_m:|\nabla u_{m}(x)|>1\}\mbox{ and }A_{2,m}=\{x\in\O_m: p(x)<2\}.
\]
Now, using Remark \ref{domain} (3) and (2), we have that for any $r>1$ that
\begin{equation}
    \label{remarkliu}
    \begin{aligned}
    \|v_m\|_{W^{1,r}(\O_m)}&\leq \|E_{2,m} v_m\|_{W^{1,r}(\O)}\\
    &\leq C \|E_{2,m} v_m\|_{H^2(\O)}\\
    &\leq C \|v_m\|_{H^2(\O_m)}
\end{aligned}
\end{equation}
where $C$ is independent of $m$.

Therefore,  using \eqref{cotavm},  \eqref{cotaum} and
\eqref{remarkliu}, we get
\begin{align*}
    \|v_m\|_{H^2(\O_m)}\leq & C (\| v_m\|^{(1+s)/2}_{H^2(\O_m)}+ \| v_m\|^{2-p_1}_{H^{2}(\O_m)}+\| g\|^{(1+s)/2}_{H^2(\O_m)}+ \| g\|^{2-p_1}_{H^{2}(\O_m)}+1)
\\ \leq & C(\| v_m\|^{(1+s)/2}_{H^2(\O_m)}+ \| v_m\|^{2-p_1}_{H^{2}(\O_m)}+1),
\end{align*}
where the constant $C$ is independent of $m$. This proves that $\{\|v_m\|_{H^2(\O_m)}\}_{m\in\mathbb{N}}$ is bounded.

Now we have, as in the proof of Theorem 2.2 in \cite{LB}, that there exist a subsequence still denote $\{v_m\}_{m\in\mathbb{N}}$ and a function
$v\in H^2(\O)\cap H^1_0(\O)$ such that,
\[
v_m \to v \quad\mbox{ strongly in } H^1(\O')
\]
for any $\O'\subset\subset \O.$ Then $u=v+g\in H^2(\O)$ and
\[
u_m \to u \quad \mbox{ strongly in } H^1(\O')
\]
for any $\O'\subset\subset \O.$
Thus, using \eqref{desig}, we have
\begin{equation}\label{limitem}
    |\nabla u_m|^{p(x)-2}\nabla u_m  \to  |\nabla u|^{p(x)-2}\nabla u \quad \mbox{ strongly in } L^{p'(\cdot)}(\O')
\end{equation}
for any $\O'\subset\subset \O.$

On the other hand, for any  $\varphi\in C^{\infty}_0(\O)$ there exist $m_0$ such that for all $m\geq m_0$
\[
\int_{\Omega_m} |\nabla u_m|^{p(x)-2}\nabla u_m \nabla \varphi\, dx=\int_{\Omega_m} f \varphi\, dx.
\]
Therefore, using \eqref{limitem} we have that $u$ is a weak solution of \eqref{problema}.
\end{proof}

\medskip
\begin{proof}[{\bf Proof of Corollary \ref{c1alpha}}]
%\begin{corol}\label{c1alpha}Let $\O$ be a bounded convex domain in $\R^2$  with polygonal boundary, $p$ and $f$ as in the previous theorem,
% $g\in W^{2,q(x)}(\O)$  and $u$ be the weak solution  of \eqref{problema}
%then $u\in C^{1,\alpha}(\overline{\O})$ for some $0<\alpha<1$.

By the previous theorem we have that $u\in H^2(\O)$, then we can derive the equation \eqref{problema} and obtain
\[
    \begin{cases}
        -a_{ij}(x)u_{{x_ix_j}}=a(x) &\mbox{in } \O,\\
            u=g &\mbox{on } \partial\O,
        \end{cases}
\]
where
\begin{align*}
 a_{ij}(x)&= \delta_{ij}+ (p(x)-2)\frac{u_{{x_i}}(x) u_{{x_j}}(x)}{|\nabla u(x)|^2},\\
a(x)&=\ln(|\nabla u(x)|)\langle \nabla u(x), \nabla p(x)\rangle +f(x)|\nabla u(x)|^{2-p(x)}.
\end{align*}

Using that $f\in \lq$ with $q(x)\geq q_1>2$ and following the lines in the proof of Theorem \ref{H2globalcbeta},
we have that $a(x)\in L^s(\O)$ with $s>2$. Therefore, by Remark \ref{c1alphalineal}, we have that $u\in C^{1,\alpha}(\overline{\O})$.
\end{proof}

%%%%%%%%%%%%%%%%%%%%%%%%%%%%%%%%%%%%%%%%%%%%%%%%%%%%%%%%%%%%%%%%%
\section{Comments}\label{coments}
In the image processing problem it is of interest the case where
$p_1$ is close to 1. By this reason, we are also interested in the
dependence of the $H^2-$norm on $p_1.$

If $N=2,$  $g\in H^2(\O)$ and $u_\ep$ is the solution of \eqref{regularizado}, we have by Lemma \ref{cotap11}, \eqref{defa} and \eqref{el}, that
there exists a constant $C$ independent of $p_1$ and $\ep$ such that
    \[
    \|u_\ep\|_{H^2(\O)}\le \frac{C}{(p_1-1)^{\kappa}}\left(\|a_\ep\|_{L^2(\O)} + \|g\|_{H^2(\O)}\right),
    \]
where $\kappa=1$ if $\O$ is convex and $\kappa=2$ if $\partial\O\in
C^2.$ Therefore, using that the Poincare's inequality and the
embedding $\wp\hookrightarrow\lpe$ hold in the case $p_1=1$ and
following the lines of Theorem \ref{H2global} and Theorem
\ref{H2convexa} we have that
\[
\|u\|_{H^2(\O)}\le \frac{C}{(p_1-1)^{\kappa}},
\]
where the constant $C$ is independent of $p_1.$

%%%%%%%%%%%%%%%%%%%%%%%%%%%%%%%%%%%%%%%%%%%%%%%%%%%%%%%%%%%%%%%%%
\appendix
%%%%%%%%%%%%%%%%%%%%%%%%%%%%%%%%%%%%%%%%%%%%%%%%%%%%%%%%%%%%%%%

\section{Regularity results for elliptic linear equations with coefficients in $L^{\infty}$}\label{B}
\setcounter{equation}{0}
\renewcommand{\theequation}{A.\arabic{equation}}

Let $\O$ be an bounded open subset of $\R^2$ and
\[
\M u=a_{ij} (x)u_{x_ix_j},
\]
such that $a_{ij}=a_{ji}$ and for any $\xi\in\R^N$
\begin{equation}\label{el1}
\lambda|\xi|^2\le a_{ij}(x)\xi_i \xi_j\le \Lambda|\xi|^2,
\end{equation}
and
\begin{equation}\label{el2}
    M_1 \le a_{11}(x)+a_{22}(x)\le M_2 \quad \mbox{in }\O
\end{equation}
where $\lambda,\Lambda, M_1$ and $M_2$ are positive constant.

\medskip

In the next lemma, we will give a $H^2-$bound for solutions of
\begin{equation}
    \begin{cases}
        \M u=f &\mbox{in } \O,\\
        u = g &\mbox{on }\partial\Omega,
    \end{cases}
    \label{eliprob}
\end{equation}
In fact, the following result is proved in Theorem 37,III in
\cite{Mi}, but it is not explicit the dependence of the bounds on
the ellipticity and the $L^\infty-$norm of $(a_{ij}(x)).$ Then,
following the proof of the mentioned theorem we can prove

\begin{lema}
    \label{cotap11}
    Let $\O$ be a bounded domain in $\R^2,$ $f\in L^{2}(\O)$ and $g\in H^2(\O).$ Then, if $u$ is a solution  of \eqref{eliprob}
         and $u\in H^2(\O)$ we have that
    \[
    \|u\|_{H^2(\O)}\le \frac{C}{\lambda^{\kappa}}\left(\|f\|_{L^2(\O)} + \|g\|_{H^2(\O)}\right),
    \]
where $\kappa=1$ if $\O$ is convex and $\kappa=2$ if $\partial \O\in C^2$ and $C$ is a constant independent of $\lambda$.
\end{lema}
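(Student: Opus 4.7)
The plan is to follow Miranda's argument (Theorem 37.III in \cite{Mi}), making explicit the dependence of all constants on the ellipticity constant $\lambda$. First I would reduce to homogeneous Dirichlet data by setting $v:=u-g$; then $v\in H^2(\O)\cap H^1_0(\O)$ solves $\M v=F$ with $F:=f-\M g\in L^2(\O)$ and $\|F\|_{L^2(\O)}\le \|f\|_{L^2(\O)}+C(M_2)\|g\|_{H^2(\O)}$. Poincar\'e's inequality gives $\|v\|_{H^2(\O)}\le C_\O\|D^2 v\|_{L^2(\O)}$, so the task reduces to showing $\|D^2 v\|_{L^2(\O)}\le C\lambda^{-\kappa}\|F\|_{L^2(\O)}$.

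The engine is the following pointwise algebraic identity, valid for any symmetric $A$ with $a_{11}+a_{22}>0$: writing $\widetilde A:=A/(a_{11}+a_{22})$ and $\widetilde Q:=(v_{11},v_{12})\widetilde A(v_{11},v_{12})^T+(v_{12},v_{22})\widetilde A(v_{12},v_{22})^T$, one checks by direct expansion that
$$
\frac{\M v}{a_{11}+a_{22}}\,\Delta v \;=\; \widetilde Q + \det(D^2 v).
$$
Since the eigenvalues of $\widetilde A$ lie in $[\lambda/M_2,\Lambda/M_1]$, ellipticity yields the pointwise lower bound $\widetilde Q\ge(\lambda/M_2)|D^2 v|^2$. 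The identity is purely algebraic, so the merely-$L^\infty$ regularity of $A$ causes no trouble. Integrating and using $\|\Delta v\|_{L^2}\le\sqrt{2}\|D^2 v\|_{L^2}$ together with $a_{11}+a_{22}\ge M_1$ produces
$$
\frac{\lambda}{M_2}\|D^2 v\|^2_{L^2(\O)} + \int_\O\det(D^2 v)\,dx \;\le\; \frac{\sqrt{2}}{M_1}\|F\|_{L^2(\O)}\|D^2 v\|_{L^2(\O)}.
$$

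For the convex case I would invoke the Miranda--Talenti fact that $\int_\O\det(D^2 v)\,dx\ge 0$ for $v\in H^2\cap H^1_0$ on a convex domain. This is obtained by approximating $\O$ by smooth convex subdomains $\O_m\nearrow\O$ with uniformly bounded Lipschitz constants (as in Remark \ref{domain}), invoking on each $\O_m$ the boundary formula $\int_{\O_m}\det(D^2 v)=\tfrac{1}{2}\int_{\partial\O_m}\kappa_{\partial\O_m}|\nabla v|^2\,dS\ge 0$, and passing to the limit using density of $C^2$ in $H^2$. Dropping the nonnegative term in the displayed inequality and dividing by $\|D^2 v\|_{L^2(\O)}$ yields $\|D^2 v\|_{L^2(\O)}\le C\lambda^{-1}\|F\|_{L^2(\O)}$, i.e.\ $\kappa=1$.

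For the $C^2$ case the determinant integral is no longer signed, but Grisvard's identity bounds it by $C_{\partial\O}\|\nabla v\|_{L^2(\partial\O)}^2$. I would then use a trace--interpolation inequality $\|\nabla v\|_{L^2(\partial\O)}^2\le\eta\|D^2 v\|_{L^2(\O)}^2+C_\eta\|v\|_{L^2(\O)}^2$, choose $\eta\sim\lambda$ to absorb the first term on the left, and estimate the residual $\|v\|_{L^2(\O)}$ by the Alexandrov--Bakelman--Pucci bound $\|v\|_{L^\infty(\O)}\le C_\O\lambda^{-1}\|F\|_{L^2(\O)}$ (in $\R^2$, $(\det A)^{1/2}\ge\lambda$ gives exactly this dependence). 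The resulting bootstrap produces $\|D^2 v\|_{L^2(\O)}\le C\lambda^{-2}\|F\|_{L^2(\O)}$, i.e.\ $\kappa=2$. The main obstacles I anticipate are, first, carefully justifying $\int_\O\det(D^2 v)\ge 0$ on merely convex Lipschitz domains through the smooth-subdomain approximation; and second, tracking the precise $\lambda$-power through the trace interpolation and the ABP estimate in the $C^2$ case to arrive exactly at $\lambda^{-2}$ without introducing any spurious factor.
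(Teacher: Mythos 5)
Your proposal is correct and follows essentially the same route as the paper's proof (which tracks the constants through Miranda's Theorem 37,III): the same pointwise identity relating $\M v\,\Delta v$, the quadratic form in $D^2v$, and $\det(D^2 v)$; the same boundary formula $\int_\O \det(D^2 v)\,dx=\tfrac12\int_{\partial\O}H\left(\partial v/\partial\nu\right)^2ds$; dropping the nonnegative curvature term when $\O$ is convex to get $\kappa=1$; and absorbing it via a trace--interpolation inequality with $\delta\sim\lambda$ in the $C^2$ case to get $\kappa=2$. The only minor deviation is that you close the $C^2$ case with an ABP bound on $\|v\|_{L^2(\O)}$, whereas the paper bounds the residual lower-order term $\|\nabla v\|_{L^2(\O)}$ by $C\lambda^{-1}\|f\|_{L^2(\O)}$ using the equation and Poincar\'e; both give the same power $\lambda^{-2}$.
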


\begin{proof}
    In this proof, we denote $u_{ij}=u_{x_ix_j}$  for all $i,j=1,2$ and $C$ is a constant independent of $\lambda.$

First, we consider the case $g\equiv0.$ Using \eqref{el1}, we have that
    \begin{align*}
        (a_{11}(x)+ a_{22}(x))(u_{12}^2 -u_{11}u_{22})=&\sum_{i,j,k=1}^2 a_{ij} u_{ki}u_{kj}-\Delta u
        \sum_{ij=1}^2 a_{ij} u_{ij}\\
        \ge&\lambda\sum_{ik=1}^2  u_{ki}^2-\Delta u f(x).
    \end{align*}
    Then, using Young's inequality, we get
    \begin{equation*}
        \frac{\lambda}{2(a_{11}(x) + a_{22}(x))}\sum_{ik=1}^2  u_{ki}^2\le
        \frac{4}{\lambda(a_{11}(x) + a_{22}(x))} f(x)^2
        + u_{12}^2 -u_{11}u_{22},
        \end{equation*}
    and by \eqref{el2}, we have that
    \begin{equation}\label{eq11}
        \sum_{ik=1}^2  u_{ki}^2\le
        \frac{C}{\lambda^2} f(x)^2
        +\frac{C}{\lambda} (u_{12}^2 -u_{11}u_{22}),
        \end{equation}

    Now, using (37.4) and (37.6) in \cite{Mi}, we have that for any $u\in H^2(\Omega)$
    \begin{equation}\label{eq21}
        \int_\O (u_{12}^2 -u_{11}u_{22}) \ dx =- \int_{\partial\O} \left( \frac{\partial u}{\partial\nu} \right)^2 \frac{H}{2} ds
    \end{equation}
        where $H$ is the curvature of $\partial\O$. If $\O$ is convex, then $H\ge 0$ and therefore, using \eqref{eq11} and \eqref{eq21} we have that
         \begin{equation}\label{eq221}
      \|D^2 u\|_{L^2(\Omega)} \le \frac{C}{\lambda}\|f\|_{L^2(\O)}.
      \end{equation}

        In the general case, we can use the following inequality
            \begin{equation}\label{eq31}
    \int_{\partial\O} \left( \frac{\partial u}{\partial\nu} \right)^2 ds\le C\left( (1+\delta^{-1}) \int_\O |\nabla u|^2 \ dx + \delta  \int_\O \sum_{ik=1}^2  u_{ki}^2
     \ dx  \right)
    \end{equation}
     for any $\delta>0.$ See equation (37.6) of \cite{Mi}.

     Then, by \eqref{eq11}, \eqref{eq21}, using that $H$ is bounded and \eqref{eq31} (choosing $\delta$ properly) we arrive to
     \begin{equation}\label{eq41}
     \int_\O \sum_{ik=1}^2  u_{ki}^2 \ dx\le \frac{C}{\lambda^2}\left(\int_\O f(x)^2 \ dx + \int_\O |\nabla u|^2 \ dx\right).
          \end{equation}

      On the other hand, using that $L u =f $ in $\O,$ \eqref{el1} and the Poincare's inequality, we have
      \begin{equation}
          \|\nabla u\|_{L^2(\O)}\le \frac{C}{\lambda}\|f\|_{L^2(\O)}.
          \label{eq51}
      \end{equation}

      Therefore, by \eqref{eq41} and \eqref{eq51}, we get
      \[
      \|D^2 u\|_{L^2(\Omega)} \le \frac{C}{\lambda^2}\|f\|_{L^2(\O)}.
      \]

      Thus, by the last inequality, \eqref{eq51} and \eqref{eq221} the lemma is proved in the case $g=0$.

When $g$ is any function in $H^2(\O)$ the lemma follows taking $v=u-g$.
     \end{proof}

The following theorem is proved in Corollary 8.1.6 in \cite{Gv}.
\begin{teo}
Let $\O$ be a convex  polygonal domain in $\R^2$, $\M$ satisfying \eqref{el1} and $u\in H^2(\O)\cap H^1_0(\O)$ be a solution of $\eqref{eliprob}$ with $g=0$ and $f\in L^p(\O)$ with $p>2$. Then $\nabla u\in C^{\mu}(\overline{\O})$ for some $0<\mu<1$.
\end{teo}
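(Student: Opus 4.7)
The plan is to follow Grisvard's framework for second-order elliptic problems on polygonal domains: improve the $H^2$ regularity already implicit in Lemma \ref{cotap11} to $W^{2,q}(\O)$ for some $q>2$, and then invoke the 2D Sobolev embedding $W^{2,q}(\O)\hookrightarrow C^{1,1-2/q}(\overline{\O})$. The argument proceeds locally via a partition of unity subordinate to three types of neighbourhoods: interior balls, neighbourhoods of the open edges of $\partial\O$, and neighbourhoods of the vertices.

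For interior balls and for neighbourhoods of a relatively open edge, I would argue as follows. Since $a_{ij}\in L^{\infty}(\O)$ and \eqref{el1} holds, the two-dimensional Meyers/Astala-type estimate for non-divergence elliptic operators gives a $W^{2,q_0}_{loc}$ bound for some $q_0>2$ depending only on $\lambda,\Lambda$. Near an open edge, with $g=0$, an odd reflection across the edge produces a solution of an elliptic equation with $L^\infty$ coefficients on a larger open set, so the same interior estimate applied to the reflected function yields a $W^{2,q_0}$ bound in a one-sided neighbourhood of each edge (excluding the vertices). The right-hand side $f\in L^p$ with $p>2$ does not obstruct this step, since we only need $f\in L^{q_0}$ for $q_0\leq p$.

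The heart of the proof is the analysis at each vertex $v$, and this is the step I expect to be the main obstacle. After translating $v$ to the origin and performing the linear change of coordinates that diagonalizes $a_{ij}(v)$ to the identity, the principal part becomes the Laplacian and the polygon locally looks like a sector of opening $\widetilde{\omega}_v$. Crucially, the affine map preserves convexity of the corner, so $\widetilde{\omega}_v<\pi$. On such a sector with zero Dirichlet data, Grisvard's decomposition allows us to split any $H^2$ solution of $-\Delta w=F$ as a sum of a regular part in $W^{2,q}$ and a finite linear combination of singular functions $r^{\lambda_k}\sin(\lambda_k\theta)$ with $\lambda_k=k\pi/\widetilde{\omega}_v$. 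Since $\widetilde{\omega}_v<\pi$ forces $\lambda_1>1$, every singular term has a H\"older continuous gradient up to the vertex. The contribution of the perturbation $(a_{ij}(v)-a_{ij}(x))u_{x_ix_j}$ is absorbed into the right-hand side using the $H^2$ bound for $u$ and, provided we choose $q-2$ and the localisation radius small enough, closed up by a fixed-point or direct iteration argument. One must verify that the coefficient perturbation keeps the transformed right-hand side in $L^q$ for some $q>2$, which is where the $L^\infty$-norm of the $a_{ij}$ and the size of the neighbourhood have to be balanced.

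Combining the three local estimates via a smooth partition of unity produces $u\in W^{2,q}(\O)$ for some $q>2$, and then the 2D embedding yields $\nabla u\in C^{\mu}(\overline{\O})$ with $\mu=1-2/q$. The constants depend on $\lambda$, $\Lambda$, the opening angles of $\O$, the exponent $p$, and $\|f\|_{L^p(\O)}$; the convexity assumption on $\O$ enters exactly once, in guaranteeing that each $\widetilde{\omega}_v<\pi$ so that $\lambda_1>1$ at every vertex. No use is made of any $C^{1,\gamma}$ smoothness of $\partial\O$ beyond straightness of the edges, so the argument is sharp for the polygonal setting.
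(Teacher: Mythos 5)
The paper gives no argument for this statement at all: it is quoted from Corollary 8.1.6 of \cite{Gv}, so your proposal has to stand on its own, and as written it does not. The obstruction is the regularity of the coefficients. The theorem assumes only \eqref{el1}, i.e. $(a_{ij})$ is bounded, measurable and uniformly elliptic, and this generality is exactly what the paper needs: in Corollary \ref{c1alpha} the coefficients contain the factor $u_{x_i}u_{x_j}/|\nabla u|^2$, which is not known to be continuous. Your interior and edge steps survive this level of generality (the two-dimensional $W^{2,q}_{loc}$ estimate for some $q>2$ depending only on $\lambda,\Lambda$ does hold for measurable coefficients, and odd reflection across a straight edge preserves the $H^2$ class and the structure of the equation), but the step you yourself call the heart of the proof fails. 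At a vertex $v$ you diagonalize $a_{ij}(v)$ and treat $(a_{ij}(v)-a_{ij}(x))u_{x_ix_j}$ as a perturbation that becomes small when the localisation radius shrinks; for merely measurable coefficients $a_{ij}(v)$ is not well defined and, for any constant matrix $A$, $\|a_{ij}-A_{ij}\|_{L^\infty(B_r(v)\cap\O)}$ does not tend to $0$ as $r\to0$, so the absorption/fixed-point argument cannot close. Coefficient freezing and the Grisvard singular-function decomposition require continuity (or at least a Dini or VMO modulus) of the $a_{ij}$ at the vertices, which is not among the hypotheses.

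A proof under the stated hypotheses must avoid freezing coefficients altogether, and the mechanism is genuinely two-dimensional: every uniformly elliptic $2\times2$ matrix satisfies the Cordes condition, so after multiplying the equation by a suitable bounded function $\gamma(x)>0$ one can write $\gamma a_{ij}D_{ij}u=\Delta u+(\gamma a_{ij}-\delta_{ij})D_{ij}u$ with $\sum_{i,j}(\gamma a_{ij}(x)-\delta_{ij})^2\le 1-\epsilon$ pointwise, $\epsilon=\epsilon(\lambda,\Lambda)>0$. On a convex polygon one has $\|D^2v\|_{L^q(\O)}\le C_q\|\Delta v\|_{L^q(\O)}$ for $v\in W^{2,q}(\O)\cap W^{1,q}_0(\O)$ and all $q\ge2$ below the threshold determined by the largest opening angle, with $C_q\to1$ as $q\to2^+$ by interpolation with the case $q=2$, which is exactly \eqref{eq21} with $H\ge0$; hence for $q>2$ close enough to $2$ the perturbation closes globally and yields $u\in W^{2,q}(\O)$, after which your final step, the embedding $W^{2,q}(\O)\hookrightarrow C^{1,1-2/q}(\overline{\O})$, is correct. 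Convexity enters only through the sign of the curvature term and through the angle condition guaranteeing that the threshold exceeds $2$, which is the same role you assign to it; but the corner analysis has to be carried by this global comparison with the Laplacian (or by the Bers--Nirenberg reduction of $\nabla u$ to a quasiregular map), not by a perturbation off the frozen operator.
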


\begin{remark}\label{c1alphalineal}
Observe that the above Theorem holds also if we consider any $g\in W^{2,p}(\O)$, since we can take  $v=u-g$ in \eqref{eliprob} and use that $W^{2,p}(\O)\hookrightarrow C^{1,1-2/p}(\overline{\O})$ .
\end{remark}

%%%%%%%%%%%%%%%%%%%%%%%%%%%%%%%%%%%%%%%%%%%%%%%%%%%%%%%%%%%%%%%%%
\section{Lipschitz Functions}\label{C}

\renewcommand{\theequation}{C.\arabic{equation}}

Using the linear extension operator define in \cite{ER}, we have the following lemma

\begin{lema}\label{ext}
Let $\O$ be a bounded open domain with Lipschitz boundary and $f\in\Lipc.$ Then, there exists a
function $\overline{f}:\R^N\to\R$ such that $\overline{f}$ is a Lipschitz function,
$\sup_{\R^N}\overline{f}=\inf_{\overline{\O}}f$ and $\inf_{\R^N}\overline{f}=\max_{\overline{\O}} f.$
\end{lema}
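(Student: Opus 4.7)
The plan is to produce the extension in two steps: first obtain \emph{some} Lipschitz extension of $f$ to all of $\R^N$, and then truncate it so the global $\sup$ and $\inf$ coincide with those of $f$ on $\overline{\O}$. For the first step I would invoke either the linear extension operator of \cite{ER} cited in the statement, or, more concretely, the classical McShane formula
\[
F(x) \;=\; \inf_{y\in\overline{\O}} \bigl\{ f(y) + L|x-y| \bigr\}, \qquad x\in\R^N,
\]
where $L$ denotes the Lipschitz constant of $f$ on $\overline{\O}$. A short direct verification shows that $F$ is Lipschitz on $\R^N$ with constant $L$ and that $F\bigl|_{\overline{\O}}=f$: for $x\in\overline{\O}$ the choice $y=x$ gives $F(x)\le f(x)$, while the Lipschitz bound $f(y)\ge f(x)-L|x-y|$ gives $F(x)\ge f(x)$.

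Next I would set $M:=\max_{\overline{\O}} f$ and $m:=\min_{\overline{\O}} f$ (which exist by continuity of $f$ and compactness of $\overline{\O}$), and define
\[
\overline{f}(x) \;:=\; \max\bigl\{\,m,\;\min\{M,F(x)\}\,\bigr\}, \qquad x\in\R^N.
\]
Since $t\mapsto \max\{m,\min\{M,t\}\}$ is $1$-Lipschitz on $\R$, the composition $\overline{f}$ is Lipschitz on $\R^N$ with the same constant $L$. On $\overline{\O}$ one has $F=f\in[m,M]$, so the truncation is inactive there and $\overline{f}$ really extends $f$. Moreover $\sup_{\R^N}\overline{f}\le M$ by construction, while $\sup_{\R^N}\overline{f}\ge\sup_{\overline{\O}}\overline{f}=\sup_{\overline{\O}}f=M$; the analogous chain gives $\inf_{\R^N}\overline{f}=m$. (I read the printed identities $\sup_{\R^N}\overline{f}=\inf_{\overline{\O}}f$ and $\inf_{\R^N}\overline{f}=\max_{\overline{\O}}f$ as a typographical slip for $\sup_{\R^N}\overline{f}=\max_{\overline{\O}}f$ and $\inf_{\R^N}\overline{f}=\min_{\overline{\O}}f$, which is what the argument above delivers and what is used in Section~\ref{pt11}.)

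I do not foresee a genuine obstacle: Lipschitz extension of real-valued functions is classical (McShane/Whitney), and the truncation step is elementary. The only point to watch is that truncating by constants does not destroy either the Lipschitz estimate or the extension property inside $\overline{\O}$; both of these follow from the trivial observation $m\le f\le M$ on $\overline{\O}$ together with the fact that a composition of Lipschitz maps is Lipschitz with the product of the constants.
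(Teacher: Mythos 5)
Your argument is correct and complete, but it is a genuinely different (and more self-contained) route than the paper's. The paper gives no proof at all: it simply asserts the lemma "using the linear extension operator defined in \cite{ER}", i.e.\ it delegates the extension step to the Edmunds--R\'akosn\'{\i}k operator, which is where the hypothesis that $\partial\O$ is Lipschitz enters. Your McShane formula $F(x)=\inf_{y\in\overline{\O}}\{f(y)+L|x-y|\}$ produces the extension directly from the compactness of $\overline{\O}$ and the Lipschitz bound on $f$, with no regularity of the boundary required, and the subsequent truncation $\overline{f}=\max\{m,\min\{M,F\}\}$ is exactly the right elementary device to pin down the global supremum and infimum; the composition is $L$-Lipschitz since the truncation map is $1$-Lipschitz, and the truncation is inactive on $\overline{\O}$ because $m\le f\le M$ there. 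What the paper's citation buys is a \emph{linear} extension operator (useful elsewhere in the variable-exponent literature), which McShane's construction is not, but linearity is nowhere needed for this lemma or for its use in Section 4. Your reading of the displayed identities as a typographical slip for $\sup_{\R^N}\overline{f}=\max_{\overline{\O}}f$ and $\inf_{\R^N}\overline{f}=\inf_{\overline{\O}}f$ is the correct one: that is precisely what is needed in the proof of Theorem 1.1, where the extended exponent must keep the same lower bound $p_1>1$ and upper bound $p_2$ before Lemma C.2 is applied.
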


\begin{lema}
    \label{lipyc1}
    Let $f:\R^N\to\R$ be Lipschitz function. Then for each $\ep>0,$ there exists a $C^1$ function
    $f_\ep:\R^N\to\R$ such that
\begin{enumerate}
\item
    $
    |\{x\in\R^N\colon f_\ep(x)\neq f(x) \mbox{ or } Df_\ep(x)\neq Df(x)\}| \le \ep.
    $
    \item
There exist a constant $C$ depending only on $N$ such that,
    \[
    \|Df_\ep\|_{L^\infty(\R^N)}\le C Lip(f).
    \]

\item
If $1<f_1\le f(x)\le f_2 $ in $\R^N,$ we have
    \[
    1 < f_\ep(x)\le f_2 + C\ep^{\frac1{N}} \mbox{ in } \R^N
    \]
    with $C$ a constant depending  only on $N.$
\end{enumerate}
\end{lema}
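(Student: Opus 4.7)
\smallskip

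\noindent\textbf{Proof plan for Lemma \ref{lipyc1}.} The approach is a standard Lusin-type approximation via the Whitney extension theorem, with the novelty being the explicit quantitative bound in (3), which we deduce from an elementary measure-theoretic observation about how densely a set of near-full measure must meet $\R^N$.

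First, by Rademacher's theorem, $f$ is differentiable a.e.\ on $\R^N$ and $\|Df\|_{L^{\infty}(\R^N)}\le L:=\mathrm{Lip}(f)$. Combining Lusin's theorem with an Egorov-type refinement (as in Evans--Gariepy, Thm.~6.15) I would produce, for each $\ep>0$, a closed set $F\subset\R^N$ with $|\R^N\setminus F|\le\ep$ on which $f$ and $Df$ are both continuous and, crucially, the remainder
\[
\sup_{\substack{x,y\in F\\ 0<|x-y|<\delta}}\frac{|f(y)-f(x)-Df(x)(y-x)|}{|x-y|}
\longrightarrow 0\quad\text{as }\delta\to 0^+,
\]
which is exactly Whitney's condition for the $1$-jet $(f|_F,Df|_F)$.

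Next I would invoke the Whitney extension theorem to produce a function $f_\ep\in C^1(\R^N)$ with $f_\ep=f$ and $Df_\ep=Df$ on $F$, and with $\|Df_\ep\|_{L^{\infty}(\R^N)}\le C_N L$, where $C_N$ depends only on $N$. Property (1) follows immediately from $F\subset\{f_\ep=f\}\cap\{Df_\ep=Df\}$ and $|\R^N\setminus F|\le\ep$; property (2) follows directly from the Whitney bound.

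For property (3), the key observation is the following purely measure-theoretic fact: since $|\R^N\setminus F|\le\ep$, any open ball $B(x,r)$ with $|B(x,r)|>\ep$ must meet $F$. Taking $r=c_N\ep^{1/N}$ with $c_N$ the inverse volume constant of the unit ball, this gives
\[
\sup_{x\in\R^N}\mathrm{dist}(x,F)\le c_N\,\ep^{1/N}.
\]
Now from the Whitney construction, $f_\ep(x)=\sum_j\phi_j(x)\bigl(f(y_j)+Df(y_j)\cdot(x-y_j)\bigr)$, where the $y_j\in F$ are closest points to Whitney cubes $Q_j\ni x$, and satisfy $|x-y_j|\le C_N\,\mathrm{dist}(x,F)$. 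Using the partition of unity property $\sum_j\phi_j\equiv 1$ together with $f(y_j)\le f_2$ and $|Df(y_j)|\le L$, I obtain
\[
f_\ep(x)\le f_2 + C_N L\,\mathrm{dist}(x,F)\le f_2+C\,\ep^{1/N},
\]
and symmetrically $f_\ep(x)\ge f_1-C\,\ep^{1/N}$. Since $f_1>1$, the lower bound $f_\ep>1$ holds for $\ep$ small enough (for larger $\ep$ one may restrict attention to $\ep\le\ep_0$ with $C\ep_0^{1/N}<f_1-1$, which is the only regime needed in the applications).

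The main obstacle is the verification of Whitney's uniform condition on $F$: one must ensure that the Egorov-type selection of $F$ is compatible with both the measure bound $|\R^N\setminus F|\le\ep$ and the uniform vanishing of the remainder, and this is the only non-elementary step. Everything else, including the crucial bound $\mathrm{dist}(x,F)\lesssim\ep^{1/N}$, is then a direct consequence of the construction.
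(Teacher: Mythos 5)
Your proposal is correct and follows essentially the same route as the paper: items (1) and (2) are exactly the $C^1$ Lusin-type approximation of Evans--Gariepy (which the paper simply cites as Theorem 1, p.~251 of \cite{EG}, and which you re-derive via the Whitney extension theorem), and item (3) rests on the same measure-theoretic observation that a set of measure at most $\ep$ cannot contain a ball of radius much larger than $\ep^{1/N}$. The only real difference is how (3) is closed: you open up the Whitney construction and estimate $f_\ep(x)=\sum_j\phi_j(x)\bigl(f(y_j)+Df(y_j)\cdot(x-y_j)\bigr)$ using $|x-y_j|\le C_N\,\mathrm{dist}(x,F)$, whereas the paper argues by contradiction using only the \emph{conclusions} (1) and (2): if $f_\ep(x)=f_2+\delta$, the uniform Lipschitz bound from (2) forces the entire ball of radius $\delta\,(C\,\mathrm{Lip}(f))^{-1}$ around $x$ to lie in the exceptional set, whence $\delta\le C\ep^{1/N}$. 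The paper's version is marginally more robust, since it treats $f_\ep$ as a black box satisfying (1) and (2) rather than relying on the internals of the Whitney construction, but both arguments are valid. Two shared caveats worth noting: the constant in (3) actually depends on $\mathrm{Lip}(f)$ as well as on $N$ (in both your argument and the paper's), and the strict lower bound $f_\ep>1$ only holds after restricting to $\ep$ small enough that $C\ep^{1/N}<f_1-1$ --- a point you state explicitly and the paper leaves implicit in its ``analogously'' for the other inequality.
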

\begin{proof}
Items (1) and (2) follow by Theorem 1, pag. 251 in \cite{EG}.

To prove (3), let as define $$\Omega_0=\{x\in\R^N\colon f_\ep(x)= f(x) \mbox{ and } Df_\ep(x)=Df(x)\}$$
and  let as suppose that there exist $x\in\R^N\setminus\Omega_0$  such that $f_{\ep}(x)=f_2+\delta$ with $\delta>0$. If $x_0\in \Omega_0$,  by (2), we have
\[C Lip(f) |x-x_0|\geq f_{\ep}(x)-f_{\ep}(x_0)=f_2+\delta-f(x_0)\ge \delta.\]
Then $B_{\rho}(x)\subset \R^N\setminus\Omega_0$ where $\rho={\delta}(C Lip(f))^{-1}$ and using (1) we get $\delta\le C \ep^{1/N}$, for some constant $C$ independent of $\ep$.

Analogously we can prove the other inequality.
\end{proof}

\bibliographystyle{amsplain}
%\bibliography{las}
\def\cprime{$'$} \def\ocirc#1{\ifmmode\setbox0=\hbox{$#1$}\dimen0=\ht0
  \advance\dimen0 by1pt\rlap{\hbox to\wd0{\hss\raise\dimen0
  \hbox{\hskip.2em$\scriptscriptstyle\circ$}\hss}}#1\else {\accent"17 #1}\fi}
\providecommand{\bysame}{\leavevmode\hbox to3em{\hrulefill}\thinspace}
\providecommand{\MR}{\relax\ifhmode\unskip\space\fi MR }
% \MRhref is called by the amsart/book/proc definition of \MR.
\providecommand{\MRhref}[2]{%
  \href{http://www.ams.org/mathscinet-getitem?mr=#1}{#2}
}
\providecommand{\href}[2]{#2}

\end{document}